\title{Continuous Branching Processes with Settlement in Cancer Metastasis: Stochastic Modelling and the Feller Property}
\author{Ivan Biočić, Bruno Toaldo, Lena Zuspann}
\date{}
\numberwithin{equation}{section}
\newtheorem{theorem}{Theorem}[section]
\newtheorem{definition}[theorem]{Definition}
\newtheorem{lemma}[theorem]{Lemma}
\theoremstyle{definition}
\newtheorem{remark}[theorem]{Remark}
\newtheorem{example}[theorem]{Example}
\newtheorem*{assumption*}{\assumptionnumber}
\providecommand{\assumptionnumber}{}
\newenvironment{assumption}[2]
{%
	\renewcommand{\assumptionnumber}{(\textbf{#1#2})}%
	\begin{assumption*}%
		\protected@edef\@currentlabel{(\textbf{#1#2})}%
	}
	{%
	\end{assumption*}
}
\DeclareMathOperator\supp{supp}	
\newcommand{\R}{{\mathbb R}}
\newcommand{\N}{{\mathbb N}}
\renewcommand{\P}{\mathds P}
\newcommand{\E}{{\mathds E}}
\newcommand{\EXP}{\textrm{Exp}}
\newcommand{\1}{\mathds{1}}
\renewcommand{\AA}{\mathcal{A}}
\newcommand{\BB}{\mathcal{B}}
\newcommand{\MM}{\mathcal{M}}
\newcommand{\KK}{\mathcal{K}}
\newcommand{\DD}{\mathcal{D}}
\newcommand{\LL}{\mathcal{L}}
\newcommand{\FF}{\mathcal{F}}
\newcommand{\XX}{\mathcal{X}}
\newcommand{\wt}{\widetilde}
\newcommand{\PAR}{\textrm{par}}
\newcommand{\Root}{\{\emptyset\}}
\begin{document}
	\maketitle

	\begin{abstract}
		Motivated by models of cancer metastasis, this paper introduces a type of (multi-type) branching process that records the positions of particles, representing tumor cells or clusters. Particles may be absorbed (removed from the state space), move, or settle. The process is rigorously constructed, and the Markov property is established via embedding into a multidimensional process that tracks the labels, positions, and phases (moving or resting) of living particles. The Feller property for the associated semigroup is investigated. It is proved for a simplified model that tracks the number of particles in each class, and an explicit generator is derived, enabling Feynman-Kac-type formulas in this framework.
	\end{abstract}
	
	\bigskip
	\noindent {\bf AMS 2020 Mathematics Subject Classification}: Primary 60J80; Secondary 60J85, 60G53.

	\bigskip\noindent
	{\bf Keywords and phrases}: Branching processes, Cancer Metastasis, Feller property, Fisher-KPP equation

\section{Introduction}

Understanding how local growth mechanisms interact with rare long-range dispersal events to produce large-scale patterns is a central problem in many spatially structured systems \cite{Hastings2004, Lewis2016}. Examples include seed dispersal and plant colonisation in ecology \cite{Kim2022}, range expansion and evolution in animal populations \cite{Duchen2020, Wrensford2025, He2026} as well as the spread of forest fires through ignition cascades \cite{Zacharakis2023}. Although these processes differ in biological or physical detail, from a mathematical perspective, these dynamics are naturally represented through branching systems that record both spatial position and ancestry, which is genealogical in biological settings (offspring, mutations) and causal in physical ones (ignitions, infections). Thus, such models provide a unified framework for studying how local mechanisms and stochastic dispersal combine to shape global propagation.

Metastatic spread in cancer also exhibits a combination of such multi-scale temporal and spatial mechanisms \cite{Durrett2015, Avanzini2019}. Biologically, the process involves the dissemination of cells from a primary tumour, entering the vascular system, their survival in transport, and rare successful colonisation of distant organs; an ordered phenomenon known as the invasion-metastatic cascade \cite{Hesketh2013}. The complexity and interplay of different mechanisms cause metastases to account for about 80\% of cancer related deaths \cite{Gupta2006, Hanahan2011}. Thus, it is crucial for mathematical modelling in this area to accurately cover these underlying processes which are stochastic in nature, spatially structured and involve genealogical evolution \cite{Franssen2019}. Modelling efforts have made significant progress on individual aspects of the invasion-metastatic cascade \cite{BELLOMO2008, Scott2013}, such as primary tumour growth and invasion \cite{Enderling2014, ElHachem2021, Blanco2021, Sfakianakis2021, Katsaounis2023, Hu2024}, vascular transport \cite{Lenarda2019, Martinson2021, Ezeobidi2025}, or genetic evolution of advantageous mutations \cite{Reiter2017, Heyde2019, Hirsch2025}. Moreover, recent work has attempted to capture metastatic spread entirely in single network-type models with heavy-tailed transition probabilities between grid-like sites representing different organs \cite{Franssen2019, Yamamoto2019, Singh2025, Wieland2025}. However, such modelling efforts require introducing strong simplifications with respect to models covering only a particular step of the invasion-metastatic cascade in order to study analytical properties or appropriately simulate the model \cite{Franssen2019, Singh2025}. For this reason, we adopt a different perspective: we aim at drawing from well-established spatially and temporally local modelling approaches to inform global dynamics such as proliferation, spreading and settlement rates. These rates can then be incorporated into a global model capturing genetic lineages relating cancer cells to each other, while simultaneously tracking their spatial positions when leaving a primary tumour and surviving to form metastases. This framework then remains mathematically rigorous and analytically tractable, while incorporating more of the underlying biological mechanisms. 

In this work, we focus on rigorously setting up the global model for our proposed approach. Concretely, our model builds on the conceptual idea of \cite{Frei2019}, who introduced a particle-based continuous-time spatial branching process with settlement, where particles correspond to cancer cell clusters. We extend their approach to allow for additional properties of particles to be considered, which is highly relevant when incorporating more specific dynamics of metastatic spread, but also of other possible applications. Moreover, we situate our model within the framework of classic branching theory \cite{Harris-branching1963, AthNey, Li2011, Kyprianou2014} which allows us to rigorously investigate its probabilistic structure. Indeed, it can be seen that our model is related to classical multi-type (Markov) branching processes \cite{AthNey, Fittipaldi2022, Horton2023, Li2024}; however we will include a spatial component tracking the position of each particle. As a first step, we prove that our process retains the Markov property on an appropriate state space incorporating type and spatial position. Under a slight reformulation, the corresponding semigroup is then shown to be Feller. Furthermore, we derive an explicit formulation of its generator which opens the door to different areas of probabilistic-analytical investigations. In particular, one can then look at Feynman-Kac-type results connecting expectations over particle trajectories to solutions of linear parabolic equations \cite{DelMoral2000}. Extending to nonlinear settings, the associated Feller semigroup can be used to obtain McKean-type representations, which express solutions of nonlinear reaction-diffusion equations as expectations over the entire branching population \cite{Berestycki2014}. More generally, the generator and scaling limits naturally connect our branching model to the broader theory of superprocesses \cite{Etheridge2000, Chen2007}.

Of special interest in the context of cancer modelling is the analytical connection via the McKean-representation to the Fisher-Kolmogorov-Petrovskii-Piskunov (Fisher-KPP) equation, which is widely used to model front propagation phenomena, particularly in tumour invasion \cite{Gatenby1996, Gerlee2016, Colson2021, Lorenzi2024} among others \cite{Martin2016, Chen2020, Painter2023, Simpson2024}; our results consist in a first step forward in the direction of establishing this connection for the general processes introduced here. We remark that, also in this regard, our results provide an extension of the framework in \cite{Frei2019}, as they investigate a connection to mild solutions of integro-differential equations which are less common in the context of modelling cancer spread.

The remainder of this paper is organised as follows. We give a heuristic description of the proposed branching process model in the remainder of Section 1. Section 2 provides the rigorous construction of our modelling framework, where the respective elements are introduced in full generality and we provide examples on how they translate to metastatic spread. This deliberate choice makes our model readily applicable to other rare long-range dispersal phenomena. In Section 3, we provide an explicit upper bound on the expected amount of alive particles at a fixed time and prove the Markov property for the branching process. Section 4 then discusses the Feller property of the associated semigroup for the branching process and a simplified version of it. Finally, in Section 5, we derive an explicit formulation of the generator.

	\subsection{Heuristic description of the metastatic branching process}\label{ss:heuristic}
    In this model, we start with one initial tumour cell (or a cluster of cells) moving randomly through the body as a stochastic process $(Z_t)_t$. After an exponential random time with the rate $\mu_S$, it is settled at the current location. After the tumour cell/cluster is settled, it can shed (give birth to) new cells/clusters into the circulatory system of the body, where the shedding happens at a given rate $\mu_B$. Each newly shed cell/cluster starts its movement from the position of its settled parent, and moves independently of all other cells/clusters according to the same law of the stochastic process $(Z_t)_t$, then gets settled and sheds new cells/clusters.
	
	Here, we also consider the possibility that a given cell/cluster can die before it settles (according to an exponential random time with the given rate $\delta_M$, or after hitting some fixed set), and therefore it cannot shed new cells/clusters. Here, the death by hitting some set may have an interpretation that if the cell/cluster reaches some specific set of organs it is flushed out of the system. Also, the settled particle can die after some independent exponential random time with rate $\delta_S$ or when the settled particles sheds more then $L$ offsprings in total, and obviously in that case it also stops shedding new cells/clusters. Here, the death triggered by exceeding a total of $L$ offsprings can be interpreted as the settled particle having exhausted all resources necessary for further reproduction.

    Our construction of the model allows us to consider also the case when the initial tumour cell is settled at the beginning. Both are biologically reasonable since starting by a settled tumour cell correspond to the time of its detection, while a moving tumour cell corresponds to detecting a tumour e.g. through the concentration in a patient's blood draw.

    The metastatic branching model described above can be inserted in the theory of multi-type Markov branching processes, as in \cite[Chapter V, Section 7]{AthNey}, where the emphasis is made on the number of cells of each type. Here, on the other hand, it is of the greatest importance, with respect to the biological/oncological motivation, to study the positions of the cells, and not only the number of cells of each type.

    The rigorous construction and some additional interpretations of the described metastatic branching process is done in Section \ref{s:construction}. To see more vividly the described evolution, it is instructive to look at Figure \ref{fig:PossibleDynamics}.
	\begin{figure}[ht]
			\centering
			\includegraphics[width=\textwidth]{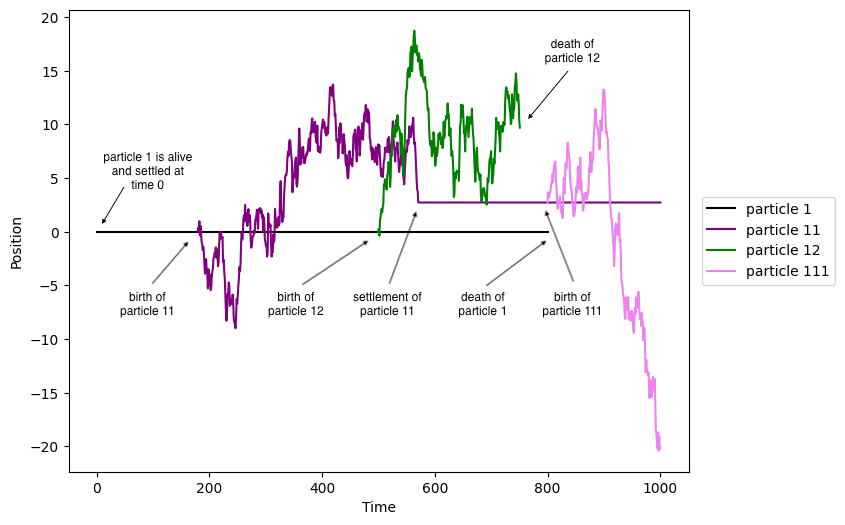}
			\caption{Possible dynamics of the metastatic branching process}
			\label{fig:PossibleDynamics}
		\end{figure}

	\section{Construction of the metastatic branching process}\label{s:construction}

	In this section, we rigorously construct the metastatic branching process heuristically described in the previous section. As we mentioned there, in cancer modeling, it is relevant to consider the positions of moving particles, not just the number of particles of each type. Therefore, we construct a Markov process here that includes the position of each alive particle in its coordinates. The branching process whose rigorous construction is provided in this section is a general object including previous models arising in applications (e.g., \cite{Frei2019}) as particular cases of our unifying framework.
	
	In the construction, we use nomenclature of the branching theory since we deal with quite a general model, with possible applications not only in biology. In particular, we will treat tumour cells/clusters simply as (abstract) particles. The comments and remarks on the introduced objects, however, will remain in the biological nomenclature. 
	
	\subsection*{Labeling}
	First we describe the labeling of the particles. We will use the Ulam-Harris-Neveau-notation, i.e. the labels are elements of
	\begin{equation}\label{eq:DefParticleLabelling}
		\mathcal{L} \coloneqq\bigcup_{n=0}^\infty\N^n.
	\end{equation}
	
	Using the convention $\N^0 = \Root$, an element of $\mathcal{L}$ consists of a finite sequence $l = (l_1, \dots, l_n)$ of integers for which we introduce the following widely accepted notions:
    the generation of $l$, denoted by $\vert l \vert$, is given by $n$;
	the history of $l$ up to the generation $m\in\N$, $m\le n$, is denoted by $l_{\vert m}\coloneqq (l_1, \dots, l_m)$;
    the parent label is given by the mapping $\PAR: \mathcal{L}\backslash \emptyset \rightarrow \mathcal{L}$ with $\PAR(l) = \PAR\left((l_1, \dots, l_n)\right) \coloneqq (l_1, \dots, l_{n-1})$;
    the continuation of a genealogical line: if two labels $k=(k_1, \dots, k_m)$ and $l=(l_1, \dots , l_n)$ both belong to $\mathcal{L}$, we write $kl = (k_1, \dots , k_n, l_1, \dots, l_m)$, so in particular, we have for any $l\in\mathcal{L}$ that $l\emptyset = \emptyset l = l$.

	With this notation, we define an ordering of these labels within a tree structure, cf. \cite[Definition 1]{Berestycki2014}.
	\begin{definition}\label{def:tree}
		A (locally finite, rooted) plane tree $T$ is a subset of $\mathcal{L}$ satisfying
		\begin{enumerate}[(a)]
			\item $\emptyset \in T,$
			\item $l \in T\backslash \left\{\emptyset\right\} \, \Rightarrow \, \PAR(l) \in T,$
			\item For every $l\in T$, there exists an integer $A_l(T) =:A_l \geq 0$ such that for every $i\in \N$, $li\in T$ if and only if $i \leq A_l$.
		\end{enumerate}
		Moreover, we denote the set of all (locally finite, rooted) plane trees by $\mathcal{T}$.
	\end{definition}
    The particle $\Root$ is usually called the root.

	\subsection*{Particle movement}
    We will consider the particle movement  in a locally compact, separable metric space $E$ (with the corresponding metric $d_E$). This choice allows us to formulate classical models (such as \cite{Frei2019}) in a more general setting while, in terms of applications, it gives more possibilities, see Remark \ref{r:space-movement}.
    
    All particles, independently of each other, follow the movement of the Markov process $Z=(Z(t))_t$ in $E$ with c\`adl\`ag paths. By $Z_p=(Z_p(t))_t$ we will denote the movement of the particle $p$ in $E$, while with $Z$ we will refer to the generic movement of this Markov process.
    
    \begin{remark}\label{r:space-movement}
        The movement of the particle $p$ denoted by $Z_p$ as above, does not have to describe its position in a strict sense. E.g. for the tumour cell, we can have $E=\R^3$ denoting its position. But if we also want to track the tumour cluster, it is important to track its size, so in this case we can have a Markov process in $E=\R^3\times [0,\infty)$ where the part in $\R^3$ serves as the position of the tumour cluster, and the part in $[0,\infty)$ is the cluster size. In other words, $E$ can serve as the space for a set of characteristics of a particle. The restriction here is that the change in time of such characteristics should satisfy the Markov property. 
        
        An example could be a particle's genetic information and tracking mutations thereof over time. In practice, this is highly relevant as some drugs rely on specific genetic mutations to be able to bind to the cancer cells. In terms of modelling, changes to this characteristic can be viewed as a discrete-state Markov chain with switching probabilities corresponding to the likelihood of a particular mutation occurring from another. 
    \end{remark}

    Here we are allowing that the particle $p$ dies on its own, in the sense that $Z$ does not have to be a conservative process. More precisely, we consider the process $Z$ which in fact lives on $E\cup \partial$, where $\partial$ is an added point to $E$, called the cemetery, where the process moves after its death, i.e. we allow $\P(Z(t) \in E|Z(0)=x)<1$.

    \subsection*{Particle settlement}
    In the heuristic description of the metastatic branching process, we said that each particle (i.e. a tumour cell/cluster) may settle in the current position  after independent exponential time with rate $\mu_S$. To be able to track such behavior, in view of the desired Markov property, we define the process $\theta_p=(\theta_p(t))_t$ such that $\theta_p(t)=0$ if the particle $p$ is not settled, and $\theta_p(t)=1$ if it is. To be coherent with the heuristic description of the metastatic branching process, we assume that $\theta_p$ is right-continuous and non-decreasing such that $\theta_p(t)\in\{0,1\}$ for all $t>0$. In other words, there is only one jump of $\theta_p$ and in that time, the particle $p$ stops moving and gets settled in its current position.

    \begin{remark}
        Settlement in the case of modelling metastatic spread of cancer corresponds to the moving cancer cells extravasating from the bloodstream to establish a new colony. Note that, in practice, there are multiple underlying biological processes involved determining whether a settlement attempt of a cancer cell cluster is successful in the sense that the cells survive long enough in the hostile environment of another organ (with e.g., less or different kinds of nutrients present) to proliferate and eventually form another tumour. 

        Modelling all of these contributing factors specifically would result in a model impossible to analyse analytically or simulate computationally. This motivates to instead consider a global scale model depending on a small number of interpretable parameters, in this case exponential waiting times, which could then be inferred from looking at local dynamics given these concrete biological mechanisms.
    \end{remark}
	
	\subsection*{Particle reproduction}
    Contrary to classic definitions of branching processes as introduced in \cite{Berestycki2014}, in our model the parent particle does not die at the birth of its children, but stays alive and can even continue to have more children. We model this in view of \cite{Kyprianou} by introducing for each particle $p\in\tau$ a reproduction process on the real line, denoted by $\xi_p$. Since reproduction in the applications can only happen after settlement and is governed by exponential waiting times with parameter $\mu_B \geq 0$, we will assume that for each particle $p$, independently of all others, follows the reproduction process $\xi_p=(\xi_p(t))_t$ which is a compound Poisson process started in a random position $\xi_p(0)=\xi_0$ in $\N_0$ (i.e. $\xi_0$ is a random variable with values in $\N_0$). 
    
    We use the notation
    \begin{align}\label{eq:xi0 distribution}
        \xi_0\sim
        \begin{pmatrix} 0 & 1 & 2 &\dots\\q_0 & q_1 &q_2 & \dots
    \end{pmatrix},
    \end{align}
    where it is allowed that $q_k=0$ for some or all $k\in \N_0$. The jumps sizes of the compound Poisson process $\xi_p$ are in $\N$, i.e. the L\'evy measure $\nu(ds)$ of $\xi_p$ is supported on $\N$ and $\nu(0,\infty)=\nu(\N)=\mu_B$. 
    
    Such reproduction mechanism says the following: at the settlement time, the particle immediately gives birth  to $\xi_0$ new particles (a random number, independent of the mechanism that follows), and after each of (independent) exponential waiting times with rate $\mu_B$, the settled particle $p$ gives a random number of new offsprings, where this number of offsprings follows the distribution $\nu(\cdot)/\mu_B$.
    
    We will use the following notation: for all $p\in \LL$ and $i\in\N_0$, let $\phi_{pi}$ denote the $i$-th birth time of particle $p$, where the time starts at the time of the settlement of the particle $p$. In particular, we have $0\leq \phi_{p1} \leq \phi_{p2} \leq \dots $, where $ \phi_{p1}=0$ holds in the case if $\xi_0>0$.

    \begin{example}
        A simple model where the reproduction is governed by exponential waiting times (with parameter $\mu_B$) and always with only one offspring, is the case where $\nu(ds)=\mu_B\delta_1(ds)$, where $\delta_1(\cdot)$ is the Dirac measure concentrated in $1$. Here, at (random) times $t_i=\phi_{pi}$ we have $\xi_p(t_i)-\xi_p(t_i-)=1$, where $\xi_p(t-)=\lim_{s\searrow 0}\xi_p(t-s)$.
    \end{example}
    
    \subsection*{Particle death}
    In the heuristic description, we have also considered the death of particles. Rigorously, each living particle $p$ that moves freely (i.e. which is not settled) can die after an exponential random time (independently for each particle $p$) with the rate $\delta_M\in[0,\infty)$, where  $\delta_M=0$ means that the particle cannot 
    (exponentially) die while moving. The heuristic description also considers the dying of a particle after it exits some open set $D\subset E$. Here we recall that we have already included such death by allowing the process $Z$ not  to be conservative. Indeed, a (strong) Markov process $Z$ living on $E$ and killed upon exiting $D$ is again a Markov process living now in $D$.
    
    Further, each settled particle can die after an exponential random time with rate $\delta_S\in [0,\infty]$ (independently for each particle $p$ and independently of the dying mechanism during movement). Here, $\delta_S=0$ means that the particle cannot die while being settled. On the other hand, $\delta_S=\infty$ means that the particle $p$ after it gets settled produces a random number of offsprings (by following the distribution of $\xi_0$) and then immediately dies. Additionally, the settled particle dies after it produces more than $L$ offsprings in total. This is rigorously defined as the time when the compound Poisson process $\xi_p$ crosses the level $L$, i.e. as the exit time from the open set $(-\infty,L)$.

    \begin{remark}
        In cancer, death of particles can occur due to various factors. While settled, cancer cells are under attack of T-cells administered by the immune system. Additionally, upon growing to a particular size, oxygen cannot be diffused to the tumour's centre causing it to enter into a state of hypoxia which, among other consequences, can lead to cancer cell death. On the other hand, moving cancer cells are subject to all kinds of forces and pressures applied to them as they travel with the vascular flow causing them to die before reaching a secondary site. Similarly as for particle settlement, we abstract these underlying biological mechanisms by exponential death rates which in practice can be inferred from localised dynamics.

        This abstraction makes our model not only applicable for modelling the specific case of metastatic spread in cancer but other phenomena with similar global scale dynamics as well, e.g., seed dispersal in agricultural applications, epidemic spread or forest fire spread.
    \end{remark}
    
    \subsection*{Probability space construction}

    Let $(\Omega,\FF, \P)$ be the (product) probability space on which are defined for all $p\in \LL$ the processes $Z_p$, $\theta_p$, $\xi_p$, and the exponentials $\EXP(\delta_M)_p$, $\EXP(\mu_S)_p$ and $\EXP(\delta_S)_p$\footnote{These denote the exponential death while moving, the (exponential) settling time, and the exponential death during settlement, respectively.} and which are all independent, and also independent as a family indexed by $p\in\LL$.

    We can define the metastatic branching process $X=(X(t))_t$ on $(\Omega,\FF, \P)$ by the iterative procedure such that for $t\ge0$ we have
    \begin{align}\label{eq:def-X}
        X(t)&=(\MM(t),\{\wt Z_p(t):p\in \MM(t)\}, \{(\wt\theta_p(t),\wt\xi_p(t)):p\in \MM(t)\}).
    \end{align}
    Here, $\MM (t)$ is, for any $t \geq 0$, a (random) set of living particles at time $t$, the second family contains their positions, and the third family contains the statuses and the number of offsprings that each of the particle in $\MM$ had up to time $t$. We consider the last two families as ordered family, i.e. as vectors rather then sets.
    
    The construction of the process is best understood by its construction in the following steps: Fix $x\in E$ and put $X(0)=(\Root,x,(0,0))$.
    \begin{enumerate}
        \item For $t\in[0,S_{\Root}\wedge D_{\Root})$ we set $X(t)=\big(\Root, Z_{\Root}(t),(0,0)\big)$, where $S_{\Root}=\EXP(\mu_S)_\emptyset$ and $D_{\Root}=\EXP(\delta_M)\wedge T_D$ denote the settlement and the death time of the root particle $\Root$. Here $Z_{\Root}(t)$ moves under the probability measure $\P(\cdot|Z(0)=x)$.
        \item 
        If $S_{\Root}< D_{\Root}$, then in $t\in[
        S_{\Root},(\phi_{\Root 1}+S_{\Root})\wedge (\EXP(\delta_S)+S_{\Root}))$ we have $X(t)=\big(\Root, Z_{\Root}(S_{\Root}),(1,0)\big)$, and in $t=\phi_{\Root 1}+S_{\Root}$, if $\xi_\emptyset(\phi_{\Root1})=k$ and by assuming $\phi_{\Root 1}+S_{\Root}<\EXP(\delta_S)+S_{\Root}$, we have 
        \begin{align*}
            X(t)&=\Big(\{\Root,1,\dots,k\}, \\
            &\quad \{Z_{\Root}(S_{\Root}), Z_1(t-(\phi_{\Root 1}+S_{\Root})),\dots,Z_k(t-(\phi_{\Root 1}+S_{\Root}))\},\\
            &\quad\quad(\{(1,k),\underbrace{(0,0),\dots,(0,0)}_{k\text{ times}}\}\Big).
        \end{align*}
        Here, e.g. the birth time of the particle $1$ is $B_1=S_{\Root}+\phi_{\Root1}$ and it is independent of $Z_1$, $\xi_1$ and $\theta_1$.
        \item We can repeat this procedure, so for  some time $t>0$ we get \eqref{eq:def-X}, where for $p\in \MM(t)$ we have
         \begin{align}\label{def:tilde-Z}
            \wt Z_p(t)\coloneqq Z_p\big((t-B_p)\wedge S_p\big),
        \end{align}
        where $B_p$ denotes the birth time of the particle $p$ (which is independent of $Z_p$ and depends on the information given by $\PAR (p)$), and $S_p$ is the settlement time of the particle $p$. Similarly, $\wt\theta_p(t)\coloneqq \theta_p(t-B_p)$ denotes the status of settlement of the particle $p$ relative to its birth time, while $\wt\xi_p(t)\coloneqq \xi_p\big((t-S_p)\vee0\big)$ is the number of offsprings that $p$ had until time $t$ started counting from the settlement time of the particle $p$.\footnote{Here $a\wedge b=\min\{a,b\}$ and $a\vee b=\max\{a,b\}$}.
    \end{enumerate}

It is clear that we could repeat the same procedure by considering also $X(0)=(\Root,x,(1,k))$, with $k\in \N_0$, i.e. the particle $\Root$ may be already settled, and it already gave birth to $k$ offspings. Then, the new offsprings, if e.g. $\xi_{\Root}(\phi_{\Root 1})=r$, would have labels $\Root(k+1),\dots,\Root(k+r)$. Moreover, we could consider starting even from $X(0)=(q,x,(s,k))$, with $q\in \LL$, $x\in E$, and $s\in \{0,1\}$, $k\in \N_0$, where, then, the offsprings get labels that are in the subtree which has  as the root the label $q$.

By the construction, the process $X$ is a c\`adl\`ag process, and can be written as a functional $X=F(q,x,(s,k),\{Z_p,\theta_p,\xi_p,\EXP(\delta_M)_p,\EXP(\mu_S)_p,\EXP(\delta_S)_p:p\in\LL\})$, under $X(0)=(q,x,(s,k))$. It is easily seen that $X$ lives on the state space $\XX$ for which
    \begin{align}\label{eq:def-statespace}
    x\in \mathcal{X} &\overset{def.}{\iff} x=\Big(K,(z_1,\dots,z_{|K|}), \big((s_1,c_1),\dots,(s_{|K|},c_{|K|})\big)\Big),
    \end{align}
    where $K$ is a finite subset of $\LL$, $z_1,\dots,z_{|K|}\in E$, $s_1,\dots,s_{|K|}\in \{0,1\}$, and $c_1,\dots,c_{|K|}\in \N_0$. We only have finite sets $K$ since the probability that at a given time $t>0$ there is an infinite number of live particles is zero, due to Lemma \ref{lem:FiniteChildren}. Before we prove the lemma, we discuss examples of the known branching processes that our setting covers.

    \begin{example}[Branching Brownian motion]
        It can be seen that the standard dyadic branching Brownian motion (dBBM) is special case of the branching metastatic process by choosing its parameters accordingly. The Markov process $Z$ is of course the standard Brownian motion, and since the moving particles should be split in two, we take $\delta_M=0$, i.e. no death while moving, and $\delta_S=\infty$, $\xi_0\equiv 2$, i.e. death occurs immediately upon settlement and in that moment two offsprings are born. Thus, the reproduction rate of dBBM corresponds to the settlement rate, i.e. $\mu_S=1$ in the standard case.
        
        More generally, in some versions, the  death rate $\delta_M>0$ is also considered, and more general split mechanism as well (i.e. $\xi_0$ is a proper random variable in $\N$).
    \end{example}

    In the next lemma, and in the rest of the article, we impose the following assumption on the expected number of offsprings.
    \begin{assumption}{N}{}\label{as:N}
        It holds that $\E \xi_0<\infty$ and $\sum_{k=1}^{\infty}\nu(\{k\})<\infty$.
    \end{assumption}
    \begin{lemma}\label{lem:FiniteChildren}
	Assume \ref{as:N}. The number of living particles at time $t>0$ of the process $X$, denoted by $|\MM(t)|$, is almost surely finite. Moreover, it holds that 
    \begin{align}\label{eq:1420}
        \E |\MM(t)|\le e^{ C\,t}, \quad t\ge0,
    \end{align}
    where $C= (\mu_S+\mu_B)\big(\E \xi_0+\mu_B^{-1}\sum_{k=1}^\infty k\nu(\{k\})+1\big)$.
    \end{lemma}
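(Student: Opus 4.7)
The strategy is to dominate $|\MM(t)|$ by the size of a pure-birth, death-free branching process, and then bound its mean via a Gronwall argument.

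\textbf{Step 1 --- Coupling away all deaths.} On a (possibly enriched) probability space, I would couple $X$ with a process $\bar X$ in which every source of killing has been switched off: the exponentials $\EXP(\delta_M)_p$ and $\EXP(\delta_S)_p$ are disregarded, the offspring cap $L$ is removed, and $Z$ is replaced by a conservative extension to $E\cup\partial$ so that particles are never sent to the cemetery. Every particle alive in $X$ at time $t$ is then also alive in $\bar X$, so if $\bar N(t):=|\bar\MM(t)|$ denotes the total number of particles in $\bar X$ at time $t$, one has $|\MM(t)|\le \bar N(t)$ almost surely, and it suffices to show $\E\bar N(t)\le e^{Ct}$.

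\textbf{Step 2 --- Compensator and Gronwall.} In $\bar X$, the process $\bar N$ is a non-decreasing $\N_0$-valued jump process. Let $M(s)$ and $S(s)$ be the numbers of moving and settled particles at time $s$, so $M(s)+S(s)=\bar N(s)$. Conditionally on the past, at each moving particle a settlement event occurs at rate $\mu_S$ producing an independent number of fresh moving offspring distributed as $\xi_0$ (mean $\E\xi_0$), and at each settled particle a reproduction event occurs at rate $\mu_B$ producing an independent jump of $\xi_p$ (mean $m:=\mu_B^{-1}\sum_{k\ge1}k\nu(\{k\})$). Applying Dynkin's formula to $\bar N$ (after localization by the $n$-th event time $T_n$, to rule out explosion a priori) yields
\[
\E\bar N(t\wedge T_n)\;=\;1+\E\int_0^{t\wedge T_n}\bigl(\mu_S\E\xi_0\,M(s)+\mu_B m\,S(s)\bigr)\,ds.
\]
Since $\mu_S\E\xi_0\vee \mu_B m\le(\mu_S+\mu_B)(\E\xi_0+m+1)=C$, the integrand is bounded by $C\,\bar N(s)$. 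Hence $\E\bar N(t\wedge T_n)\le 1+C\int_0^t\E\bar N(s\wedge T_n)\,ds$, and Gronwall's inequality gives $\E\bar N(t\wedge T_n)\le e^{Ct}$ uniformly in $n$. Monotone convergence as $n\to\infty$ then produces both $T_n\to\infty$ a.s.\ (so $\bar N(t)$, hence $|\MM(t)|$, is almost surely finite) and the stated bound $\E|\MM(t)|\le e^{Ct}$.

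\textbf{Main obstacle.} The technical subtlety is the explosion issue: because the birth rates depend linearly on $\bar N$, a direct application of the Dynkin identity to the unbounded function $x\mapsto x$ is not justified without localization. One must first work on the stopped process $\bar N^{T_n}$, derive the bound on $\E\bar N(t\wedge T_n)$ uniformly in $n$, and only then conclude non-explosion through the uniform estimate itself. Assumption \ref{as:N}, read as $\E\xi_0<\infty$ and $\sum_{k\ge1}k\nu(\{k\})<\infty$, is exactly what makes $C$ finite and the whole scheme effective.
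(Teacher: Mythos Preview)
Your proof is correct and takes a genuinely different route from the paper. The paper dominates $X$ not by the death-free two-type process you use, but by a classical \emph{one-type} Markov branching process $\overline X$ in which every particle, at rate $\mu_S+\mu_B$, dies and is replaced by $\xi_0+Y+1$ offspring (with $Y\sim\nu/\mu_B$); a careful pairing argument shows that each particle of $X$ is born no earlier than its counterpart in $\overline X$, and then the classical formula $\E\overline N(t)=e^{(\mu_S+\mu_B)\E[\xi_0+Y+1]\,t}$ from Harris is invoked directly. Your approach instead keeps the moving/settled distinction, writes down the compensator of $\bar N$, and closes with Gronwall after localisation. Your argument is more self-contained (no appeal to the classical mean formula) and in fact yields the sharper rate $\max(\mu_S\E\xi_0,\,\sum_k k\nu(\{k\}))$ before you relax it to $C$; the paper's coupling, on the other hand, explains transparently why the constant has the specific form $(\mu_S+\mu_B)(\E\xi_0+m+1)$, since this is literally the Malthusian parameter of $\overline X$. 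One small point: your concluding ``monotone convergence then produces $T_n\to\infty$'' hides a short Fatou-type step (on $\{T_\infty\le t\}$ one has $\bar N(T_n)\to\infty$, contradicting the uniform bound), which you correctly flag in your obstacle paragraph but might spell out in a final version.
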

    \begin{proof}
        The finite number of particles at each $t>0$ comes directly from the interplay of the metastatic branching process ad the classical theory of multi-type Markov branching processes, and the finiteness of the expectation of $|\MM(t)|$ can be found in \cite[Chapter V]{AthNey}. 
        
        However, to get the bound of the form \eqref{eq:1420}, one can do a simple pairing argument with the one-type Markov branching process as follows.

        Since we want to prove that $|\MM(t)|$ is almost surely finite, it is enough to consider only the case when our moving Markov process $Z$ cannot die on its own (since, if it can die, the number of alive particles will be even smaller).
        Also, we can remove the settlement of the particles from the process and consider the mechanism: each particles freely moves and after an (independent) exponential time with rate $\mu_S+\mu_B$ produces $\xi_0+Y+1$ offsprings and immediatelly dies, where $Y$ has the distribution of the jumps of the compound Poission process $\xi$ (i.e. $Y\sim \nu(\cdot)/\mu_B$) and is independent of $\xi_0$; the offsprings then repeat (independently of each other) the same mechanism.

        The newly obtained process is a classical Markov branching process, as in \cite[Chapter V]{Harris-branching1963}, with the rate of splitting $\mu_s+\mu_B$ and the distribution of offsprings $\xi_0+Y+1$. For brevity, we denote it in the proof by $\overline X$.

        Now we describe the pairing between the  Markov branching process $\overline X$ and the metastatic branching process $X$. In $X$, the particle $\Root$ waits $\EXP(\mu_S)$ before it settles and then produces $\xi_0$ offsprings. We pair the waiting time $\EXP(\mu_S)$ with the splitting time $\EXP(\mu_S+\mu_B)$ of $\Root$ of $\overline X$ so that $\EXP(\mu_S+\mu_B)\le \EXP(\mu_S)$ (e.g. consider the uniform random variable $U$ and put $\EXP(\mu_S)=-\frac{\ln(U)}{\mu_S}$ and $\EXP(\mu_S+\mu_B)=-\frac{\ln(U)}{\mu_S+\mu_B}$). We also pair $\xi_0$ of $\Root$ of $X$ with $\xi_0$ of $\Root$ of $\overline X$ exactly, while $Y$ of $\Root$ of $\overline X$ is left independent. 
        
        Further, since $\Root$ of $\overline X$ produces $\xi_0+Y+1$ offsprings, this "$+1$" particle is looked at as the settled $\Root$ in $X$. Thus, if the settled $\Root$ of $X$ produces new $Y_1$ offsprings after $\EXP(\mu_B)$ time, we pair this $\EXP(\mu_B)$ with the splitting time $\EXP(\mu_S+\mu_B)$ of "$+1$" (so that that $\EXP(\mu_B)\le \EXP(\mu_S+\mu_B)$ in the manner as before), and the new number of offsprings $\xi_0 + Y+1$ of "$+1$" by putting $Y=Y_1$ exactly, while $\xi_0$ is independent. We use such pairing for all particles of $X$.

        This procedure produces two important consequences:
        \begin{itemize}
                \item every particle in the metastatic process $X$ is paired to a particle in the Markov branching process $\overline X$;
                \item for every such particle $p$ in $X$, and every $\omega \in \Omega$, we have that $B_p (\omega,X)\geq {B}_p(\omega,\overline X)$, i.e. the birth time of the particle $p$ in the process $X$ is bigger than the birth time of the paired particle in the process $\overline X$.
            \end{itemize}
        In other words, the number of alive particles at time $t$ of the process $X$, denoted by $N(t)$, is always smaller then the number of alive particles at time $t$ of the process $\overline X$, denoted by $\overline N(t)$, i.e. $N(t)\le \overline N(t)$.

        From the general theory of the Markov branching processes, see e.g. \cite[Theorem Theorem 6.1 in Chapter V]{Harris-branching1963}, it is know that $\overline N(t)$ is finite almost surely and that $\E\overline N(t)= e^{t\,(\mu_S+\mu_B)\E[\xi_0+Y+1]}$, from which the claim follows.
    \end{proof}

	\section{Markov property}\label{s:markov}
	In this section, we prove the Markov property of the constructed metastatic branching process given by Definition~\ref{eq:def-X}, and we keep assuming \ref{as:N}.
    The Markov property of the branching process tracking only the number of particles (i.e., the multi-type process, as explained before) is clear (see, e.g., \cite[Chapter V, Section 7]{AthNey}). Here we briefly discuss the Markov property of our process tracking particle positions: this is a very intuitive property for this process. However, the technicalities involved are instructive to understand the behavior of the process and we believe it is useful to discuss the main steps of the proof. This is done below.
    \begin{theorem}[Markov property]\label{thm:markov}
        The branching metastatic process $X$ is Markov on $\mathcal{X}$ with respect to its natural filtration {$\mathcal{F}_t$, $t \geq 0$}.
    \end{theorem}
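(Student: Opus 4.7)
The plan is to work directly from the functional representation $X=F(q,x,(s,k),\{Z_p,\theta_p,\xi_p,\EXP(\delta_M)_p,\EXP(\mu_S)_p,\EXP(\delta_S)_p:p\in\LL\})$, and show that, conditional on $\FF_t$, the shifted process $(X(t+h))_{h\ge0}$ has the same law as a fresh copy of the metastatic branching process started from $X(t)$. Since $\MM(t)$ is almost surely finite by Lemma \ref{lem:FiniteChildren}, every argument can be carried out label-by-label over a finite set and then extended by an inductive procedure on generations.

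The first step is to partition the label set $\LL$ at time $t$ into the alive set $\MM(t)$, the already-dead set, and the not-yet-born set. By construction of the iterative procedure, the building blocks attached to labels in the third class have not been referenced in the determination of $X(s)$ for any $s\le t$; hence they are independent of $\FF_t$ and retain their original joint law. For each alive particle $p\in\MM(t)$, I would identify the conditional post-$t$ law of its ingredients as follows: if $p$ is not yet settled, the Markov property of $Z$ applied at the deterministic time $t-B_p$ gives that $(Z_p(t-B_p+s))_{s\ge0}$ is, conditional on $\FF_t$, a copy of $Z$ started at $\wt Z_p(t)$, while memorylessness of the exponentials $\EXP(\mu_S)_p$ and $\EXP(\delta_M)_p$ (both conditioned to exceed $t-B_p$) produces fresh independent exponentials with the correct parameters; if $p$ is already settled with $\wt\xi_p(t)=c_p$, then the Markov property of the compound Poisson process $\xi_p$ at the deterministic time $t-S_p$, together with memorylessness of $\EXP(\delta_S)_p$, gives a fresh compound Poisson increment (whose first level-$L$ crossing time after $t$ is a first passage time of a fresh process started at $c_p$) that is independent of $\FF_t$. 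Re-indexing the ingredients of the unborn labels $pq$ for $p\in\MM(t)$ and $q\in\LL$ as a new family indexed by $\LL$, one obtains a family of building blocks whose conditional joint law given $\FF_t$ coincides with the unconditional joint law of the ingredients used to define the process. Applying $F$ to $X(t)$ and this relabelled family then produces $(X(t+h))_{h\ge0}$, and the Markov property follows.

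The main obstacle will be making this relabelling rigorous: the state $X(t)$ records particles by their Ulam--Harris labels and the reconstruction of $(X(t+h))_{h\ge0}$ has to commute with the iterative algorithm on the relabelled tree. The cleanest way to handle this is to formulate the shift-and-restart statement as a strong Markov-type identity on the product space of building blocks, invoking the (strong) Markov property of $Z$ at the stopping times $B_p\wedge t$, the (strong) Markov property of the compound Poisson processes $\xi_p$ at $S_p\wedge t$, and the memorylessness of the auxiliary exponentials on the events $\{B_p\le t<B_p+S_p\}$ and $\{B_p+S_p\le t\}$. Since the number of labels contributing at each stage is finite almost surely, one then concludes by a straightforward induction over generations of $\MM(t)$ that the conditional distribution of $X(t+h)$ given $\FF_t$ agrees with $\P_{X(t)}(X(h)\in\cdot)$ for every bounded measurable test function on $\XX$.
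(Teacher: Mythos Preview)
Your proposal is correct and follows essentially the same strategy as the paper's proof: both exploit the functional representation $F$ together with the Markov property of $Z$, the memorylessness of the exponential clocks, and the conditional independence of the subtrees rooted at the alive particles. The paper organises this via a $\pi$-system reduction and an explicit partition of $\MM(s_{j+1})$ by closest ancestor in $\MM(s_j)$, whereas you phrase it as a used/unused decomposition of the building-block family followed by a relabelling, but the content is the same (one caveat: $t-B_p$ is not literally deterministic nor is $B_p$ a stopping time for $Z_p$'s own filtration---the correct formulation is that $B_p$ is $\FF_t$-measurable on $\{p\in\MM(t)\}$ and independent of $Z_p$, so after conditioning one applies the simple Markov property of $Z_p$ at a now-fixed time).
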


    \begin{proof}
        It is clear that
        \begin{equation*}
            \pi_s = \bigcup_{n\geq 1}\bigcup_{0= s_0 < s_1 < \dots < s_n \leq s} \sigma\left(X(s_1), \dots, X(s_n)\right),
        \end{equation*}
        is a $\pi$-system generating the $\sigma$-algebra $\mathcal{F}_s$, so it is enough to prove the Markov property on a finite set of positions $X(s_1), \dots, X(s_n)$. Therefore, for $j\in\N$, and a finite set of indices $0\le s_1\le  s_2, \dots, \le s_j\le s_{j+1}$, we want to show 
        \begin{align}\label{eq:Markov-0}
            &\P\left[f(X(s_{j+1}))\middle|X(s_1), \dots, X(s_j)\right]=\P\left[f(X(s_{j+1}))\middle|X(s_j)\right],
        \end{align}
        for all $f\in \BB_+(\XX)$. Furthermore, \eqref{eq:Markov-0} is enough to prove for $f(K,\mathbf{z},(\mathbf{s},\mathbf{c}))=\1_A(K)\1_B(\mathbf{z})\1_C(\mathbf{s},\mathbf{c})$, with $A\subset \LL$, $B=B_1\times \cdots \times B_{|A|}\subset E^{|A|}$, $C=C_1\times C_2$ where $C_1\subset \{0,1\}^{|A|}$ and $C_2\subset \N_0^{|A|}$. Hence, we need to prove
        \begin{align}
            \begin{split}\label{eq:Markov-1}
            &\P\left(\MM(s_{j+1})\in A, \left\{\wt Z_p(s_{j+1}): p \in \MM(s_{j+1})\right\}\in B, \right.\\
            &\qquad \left.\left\{(\wt\theta_p(s_{j+1}),\wt\xi_p(s_{j+1})):p\in \MM(s_{j+1})\right\}\in C\middle|X(s_1), \dots, X(s_j)\right)
            \end{split}\\
            \begin{split}\label{eq:Markov-2}
            =& \P\left(\MM(s_{j+1})\in A, \left\{\wt Z_p(s_{j+1}): p \in \MM(s_{j+1})\right\}\in B, \right.\\
            &\qquad \left.\left\{(\wt\theta_p(s_{j+1}),\wt\xi_p(s_{j+1})):p\in \MM(s_{j+1})\right\}\in C\middle|X(s_j)\right),
            \end{split}
        \end{align}
        for  $A,B$ and $C$ as above.

        For each particle alive at time $s_{j+1}$, there may exist many of its predecessors alive at time $s_j$. However, only one of them is the closest one in the genealogical tree. Therefore, there is a partition of $\MM(s_{j+1})=\cup_{p\in \MM(s_{j})}\MM_p(s_{j+1})$, where $\MM_p(s_{j+1})$ consists of direct descendants of the particle $p\in \MM(s_j)$ that are alive at time $s_{j+1}$. In other words, between $p\in\MM(s_j)$ and $q\in \MM_p(s_{j+1})$ there does not exists another $p'\in \MM(s_j)$ such that $p'$ is in the middle of the genealogical line between $p$ and $q$. 
        
        Now, we can also, without loss of generality, consider the set $A$ to consist of particles that can indeed be alive (i.e. that can be obtained) by time $s_{j+1}$. Hence, we can make a (random) partition of $A=\cup_{p\in \MM(s_j)} A_p$, where $A_p$ denotes the (random) set of particle labels that can be produced (only/directly in the sense described above) by the particle $p$. Similarly, let $B_p$ and $C_p$ denote the (random) parts of $B$ and $C$, respectively, which correspond to the descendants of the particle $p\in \MM(s_j)$.
        It follows that \eqref{eq:Markov-1} equals to
        \begin{align}
            \label{eq:Markov-3}
                &\P\left(\bigcap_{p\in\MM(s_j)} \left(\MM_p(s_{j+1})\in A_p, \left\{\wt Z_q(s_{j+1}): q \in \MM_p(s_{j+1})\right\}\in B_p, \right.\right.\notag \\
                &\qquad \left.\left.\left\{\wt \theta_q(s_{j+1}),\wt\xi_q(s_{j+1}): q \in \MM_p(s_{j+1})\right\}\in C_p\right)\middle|X(s_1), \dots, X(s_j)\right).
                 \end{align}
        Now note that conditionally on $X(s_j)$, $\dots$, $X(s_1)$, the particle movements after time $s_j$ are independent of each other by the construction of the process. Hence, \eqref{eq:Markov-3} equals to
        \begin{align}
             \begin{split}\label{eq:Markov-4}
                &\prod_{p\in\MM(s_j)} \P\left(\MM_p(s_{j+1})\in A_p, \left\{\wt Z_q(s_{j+1}): q \in \MM_p(s_{j+1})\right\}\in B_p, \right.\\
                &\qquad \left.\left\{\wt \theta_q(s_{j+1}),\wt\xi_q(s_{j+1}): q \in \MM_p(s_{j+1})\right\}\in C_p\middle|X(s_1), \dots, X(s_j)\right).
            \end{split}
        \end{align}
        Now we pay attention to the terms inside of \eqref{eq:Markov-4}. Recall that the process $X$ started in $x$ is a functional of random quantities $F(x, \{\EXP(\delta_M)_p,\EXP(\mu_S)_p,\EXP(\delta_S)_p,Z_p,\xi_p:p\in\LL\})$, and that, conditionally on $X(s_j)$, $\dots$, $X(s_1)$, the process 
        \begin{align}
            \begin{split}\label{1207process}
            t\mapsto (\MM_p(t+s_{j}),&\{\wt Z_q(t+s_{j}): q \in \MM_p(t+s_{j+1})\},\\
            &\{\wt \theta_q(t+s_{j}),\wt\xi_q(t+s_{j}): q \in \MM_p(t+s_{j})\})
            \end{split}
        \end{align}
        starts from $\wt x=(\{p\},\wt Z_p(s_j),(\wt\theta_p(s_j),\wt\xi_p(s_j))$, and it is in law equal to the functional $F(\wt x,\{Exp(\delta_M)_p,Exp(\mu_S)_p,Exp(\delta_S)_p,Z_p,\xi_p:p\in\LL\})$. In other words, the process in \eqref{1207process} can be constructed in the same way as the process $t\mapsto X(t)$ started from the point $x=(\{p\},\wt Z_p(s_j),(\wt\theta_p(s_j),\wt\xi_p(s_j))$,
        so the term inside the product in \eqref{eq:Markov-4},
        for $p\in \MM(s_j)$, equals to
        \begin{align}
        \begin{split}\label{eq:Markov-homog}
            &\P\left(\MM(s_{j+1}-s_j)\in A_p, \left\{\wt Z_q(s_{j+1}-s_j): q \in \MM(s_{j+1}-s_j)\right\}\in B_p, \right.\\
                &\qquad \left.\left\{\wt \theta_q(s_{j+1}-s_j),\wt\xi_q(s_{j+1}): q \in \MM(s_{j+1}-s_j)\right\}\in C_p\right.\\
                &\left.\qquad\qquad\middle|X(0)=\big(\{p\},\wt Z_p(s_j),(\wt\theta_p(s_j),\wt\xi_p(s_j)\big)\right).
        \end{split}
        \end{align}
        Moreover, note that by changing from conditioning in \eqref{eq:Markov-homog} from $\big(\{p\},\wt Z_p(s_j),(\wt\theta_p(s_j),$ $\wt\xi_p(s_j)\big)$ to $X(s_j)$ we add no relevant information, so \eqref{eq:Markov-homog} becomes 
        \begin{align}
        \begin{split}\label{eq:Markov-homog-2}
            &\P\left(\MM_p(s_{j+1}-s_j)\in A_p, \left\{\wt Z_q(s_{j+1}-s_j): q \in \MM_p(s_{j+1}-s_j)\right\}\in B_p, \right.\\
                &\qquad \left.\left\{\wt \theta_q(s_{j+1}-s_j),\wt\xi_q(s_{j+1}): q \in \MM_p(s_{j+1}-s_j)\right\}\in C_p\middle|X(0)=X(s_j)\right).
        \end{split}
        \end{align}
        Thus, \eqref{eq:Markov-4} is equal to
        \begin{align}
            \begin{split}\label{eq:Markov-5}
                &\prod_{p\in\MM(s_j)} \P\left(\MM_p(s_{j+1}-s_j)\in A_p, \left\{\wt Z_q(s_{j+1}-s_j): q \in \MM_p(s_{j+1}-s_j)\right\}\in B_p, \right.\\
                &\qquad \left.\left\{\wt \theta_q(s_{j+1}-s_j),\wt\xi_q(s_{j+1}): q \in \MM_p(s_{j+1}-s_j)\right\}\in C_p\middle|X(0)=X(s_j)\right)
            \end{split}\\
            \begin{split}\label{eq:Markov-final0}
                &=\P\left(\bigcap_{p\in\MM(s_j)} \left(\MM_p(s_{j+1}-s_j)\in A_p, \left\{\wt Z_q(s_{j+1}-s_j): q \in \MM_p(s_{j+1}-s_j)\right\}\in B_p, \right.\right.\\
                &\qquad \left.\left.\left\{\wt \theta_q(s_{j+1}-s_j),\wt\xi_q(s_{j+1}-s_j): q \in \MM_p(s_{j+1}-s_j)\right\}\in C_p\right)\middle| X(0)=X(s_j)\right).
            \end{split}\\
            \begin{split}\label{eq:Markov-final}
                &=\P\left(\MM(s_{j+1}-s_j)\in A, \left\{\wt Z_q(s_{j+1}-s_j): q \in \MM(s_{j+1}-s_j)\right\}\in B_, \right.\\
                &\qquad \left.\left\{\wt \theta_q(s_{j+1}-s_j),\wt\xi_q(s_{j+1}-s_j): q \in \MM(s_{j+1}-s_j)\right\}\in C\middle|X(0)=X(s_j)\right).
            \end{split}
        \end{align}
        where in the second equality we again used the conditional independence of the future trajectories, while the last one is due to the partitioning of $A$.

        This shows the time-homogeneous Markov property since the right hand side of \eqref{eq:Markov-final} depends only on $X(s_j)$ and the time increment $s_{j+1}-s_j$.
    \end{proof}

	\section{Feller property}\label{s:feller}
       Recall that each time-homogeneous Markov process $Y$ on some state-space $S$ {can be associated with} a semigroup of operators $(P_t)_t$ on $\BB_b(S)$ by the formula $P_t f(x)=\E[f(Y_{t+s})|Y_s=x]$, where the term does not depend on $s>0$ by time-homogeneity. So, we may also write $P_tf(x)=\E[f(Y_t)|Y_0=x]$, {although the original construction of the process $Y$ may have been done in the canonical sense, i.e., on $D[0, +\infty)$ with respect to a family of measures $\P^x$, in such a way that $Y_0=x$ a.s. under $\P^x$}. Hence, also our metastatic process $X$ generates the semigroup of operators $P=(P_t)_t$.

	In this section, we build on the results from the previous section to prove that the metastatic branching process $X$ is not only a Markov process but also a Feller process (the corresponding metric space will be constructed in the following paragraphs). This will be possible under a slightly different representation of the metastatic process $X$, the one which does not track all labels of the particles together with their statuses, but the one that tracks only the number of moving and the number of settled particles, { i.e., a kind of multi-type process in the sense as in \cite{AthNey}.} We will show that the latter satisfies the Feller property (in Theorem \ref{t:feller}) and also we will show that the metastatic process tracking the positions (as constructed before) cannot satisfy the Feller property under the metric that seems the only reasonable for the original metastatic process (see, in particular, Remark \ref{r:notFeller}).

    {We stress that the Feller property is crucial as it permits us to determine the generator and (subset of) its domain by computing the limit pointwise (see Section \ref{s:generator} below for the details).}

    We consider the modification of the metastatic branching process, denoted by $\wt X$, with the values in the space denoted by $\wt \XX$ with
    \begin{align}\label{eq:def-statespace-No2}
        x\in \wt\XX \overset{\text{def.}}{\iff} x=\Big(n_m^x,n_s^x,(z_p)^{n_m^x+n_s^x}_{p=1}\Big),
    \end{align}
    where $n_m^x\in \N_0$, $n_s^x\in \N_0$ and $(z_p)^{n_m^x+n_s^x}_{p=1}\in E^{n_m^x+n_s^x}$, with the following meaning. The first coordinate represents the number of freely moving particles, the second coordinate represents the number of settled particles, and the third coordinate is a vector of the positions of the moving and settled particles. Here, we adopt the convention that the first $n_m^x$ terms of the last vector correspond to the moving particles, while the last $n_s^x$ terms correspond to the settled particles. Also, the case $n_m^x=n_s^x=0$ corresponds to the scenario where the process dies and then the third coordinate is at the additional point $\partial$ called the cemetery, and this is clearly an absorbing state. We use the notation
    \begin{align}\label{eq:modified-X-repr}
        \wt X(t)=\left(N_m(t),N_s(t),(\wt Z_p(t))_{p=1}^{N_m(t)+N_s(t)}\right),
    \end{align}
    where $\wt Z_p(t)$ is given in the same way as in \eqref{def:tilde-Z}. Finally, we describe the changes in the branching process by adopting the following conventions.
    \begin{itemize}
        \item If a new particle is born at time $t$, we put its position as $N_m(t)+1$--st coordinate in the vector of the positions (and increase $N_m(t)$ by one).
        \item If a moving particle is being settled at time $t$, we move its position to the $N_m(t)+N_s(t)$--st position (and decrease $N_m(t)$ and increase $N_s(t)$, both by one).
    \end{itemize}      
    
   {In the same spirit as Theorem \ref{thm:markov}, it is not hard to see that this modified process $\wt X$ is a time-homogeneous Markov process. Therefore,} it is associated with the semigroup of operators, denoted here by $P=(P_t)_t$. Since it is also a càdlàg process, from now on we consider it defined as a canonical process in the sense of \cite{Revuz}, and move to the customary notation $\mathds{E}^x [f(\wt X(t))]=P_t f(x)=\E[f(\wt X({t+s}))|\wt X(s)=x]$, and similarly $\mathds{P}^x(\cdot)=\P(\cdot|\wt X(0)=x)$. 
    
    To prove that $\wt X$ is a Feller process means that its semigroup $P=(P_t)_t$ satisfies the Feller property on the space $C_0(\wt\XX)$. In other words, this means that on the state-space $\wt\XX$ we need to define a (natural/intuitive) metric $d_{\wt\XX}(\cdot,\cdot)$, {understand the space} $C_0(\wt\XX)$ (the space of continuous functions on $\wt\XX$, vanishing at infinity, with respect to the metric $d_{\wt\XX}$), and prove the Feller conditions:
    \begin{itemize}
        \item For all $t>0$, $P_t\left(C_0\big({\wt\XX}\big)\right)\subset C_0\big({\wt\XX}\big)$,
        \item For all $f\in C_0\big({\wt\XX}\big)$, $\lim_{t\searrow0} P_tf=f$ in $C_0\big({\wt\XX}\big)$.
    \end{itemize}
    
    \subsection*{Metric on ${\wt\XX}$ and space $C_0\big({\wt\XX}\big)$}

    Recall that the space ${\wt\XX}$ essentially consists of three parts, see \eqref{eq:def-statespace-No2}, where the first and the second coordinate tell the number of moving/settled particles, while the third part tells the positions of the particles. Therefore, we define the metric $d_{\wt\XX}$ on ${\wt\XX}$ by using the discrete metric on the first and the second part, and on the third part, we use the intrinsic distance in $E$ of the alive particles.
    
    To this end, let $x,y\in {\wt\XX}$ be written as 
    \begin{align}
        x&=\Big(n_m^x,n_s^x,(z^x_p)^{n_m^x+n_s^x}_{p=1}\Big)\\
        y&=\Big(n_m^y,n_s^y,(z^y_p)^{n_m^y+n_s^y}_{p=1}\Big),
    \end{align}
    and define the function $d_{\wt\XX} : {\wt\XX}\times {\wt\XX} \to [0,\infty)$ by
    \begin{align}\label{eq:Metric}
		d_{\wt\XX}(x,y) &\coloneqq \max\Big\{\1_{\{n_m^x\neq n_m^y\}},\1_{\{n_s^x\neq n_s^y\}}\Big\} \\
        &\qquad\qquad+\1_{\{n_m^x= n_m^y\}}\1_{\{n_s^x= n_s^y\}}\left(\sum_{p=1 }^{n_m^x+n_s^x} d_E(z^x_{p}, z^y_{p})\right)\wedge 1,
	\end{align}
	where we recall that $d_E$ denotes the metric on $E$ (supporting the generic Feller process $Z$ relative to $C_0(E)$).

    \begin{lemma}\label{lem:Metric}
        The function $d_{\wt\XX}$  defines a metric on $\wt\XX$. Moreover, ${\wt\XX}$ equipped with the metric $d_{\wt\XX}$ is a locally compact and separable metric space.
    \end{lemma}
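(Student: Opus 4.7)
The proof splits into verifying the four metric axioms for $d_{\wt\XX}$ and then establishing local compactness and separability of $(\wt\XX,d_{\wt\XX})$.

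For the axioms, non-negativity and symmetry are immediate from the formula. If $d_{\wt\XX}(x,y)=0$, then the maximum of indicators must vanish, forcing $(n_m^x,n_s^x)=(n_m^y,n_s^y)$, and the second summand then forces $\sum_p d_E(z^x_p,z^y_p)=0$, whence $x=y$. The delicate axiom is the triangle inequality, which I would handle by a case analysis on which of the discrete parts $(n_m^x,n_s^x)$, $(n_m^y,n_s^y)$, $(n_m^z,n_s^z)$ coincide. When all three agree, the metric collapses to $\bigl(\sum_p d_E(z^x_p,z^y_p)\bigr)\wedge 1$, a standard truncated $\ell^1$-type metric for which the triangle inequality is classical. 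When at least one pair of discrete parts disagrees, the distance between those two points is exactly $1$ while every distance in $\wt\XX$ is at most $1$; the three sub-cases (all three pairs of discrete parts distinct; only $x,y$ sharing a discrete part; $z$ sharing its discrete part with exactly one of $x,y$) are then each immediate.

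For the topological statements, the crucial observation is that any two points in $\wt\XX$ with distinct discrete parts are at distance exactly $1$. Hence each slice $\XX_{n,m}\coloneqq\{n\}\times\{m\}\times E^{n+m}$ (together with the singleton cemetery state when $(n,m)=(0,0)$) is clopen, and on $\XX_{n,m}$ the restriction of $d_{\wt\XX}$ is topologically equivalent to the product metric inherited from $E^{n+m}$, since the truncation at $1$ does not affect the local topology. Because $E$ is locally compact and separable, so is each finite product $E^{n+m}$, and therefore so is $\XX_{n,m}$. Since $\wt\XX$ is the disjoint union of countably many such clopen pieces, local compactness transfers: a compact neighborhood of any $x\in\XX_{n,m}$ inside $\XX_{n,m}$ is still compact in $\wt\XX$ because $\XX_{n,m}$ is closed. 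Separability follows from taking the union over $(n,m)\in\N_0^2$ of countable dense subsets of the slices.

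I expect the main bookkeeping effort to lie in the triangle inequality case analysis, due to the interaction between the indicator prefactor and the truncated sum; once that is in place, the clopen decomposition reduces the topological claims to standard facts about finite Cartesian powers of the locally compact separable metric space $E$.
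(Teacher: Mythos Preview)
Your proposal is correct and follows essentially the same approach as the paper: the paper dismisses the metric axioms as an easy exercise (where you supply the case analysis), and for local compactness and separability it uses exactly the observation that points with different discrete parts are at distance $1$, so one works inside a single slice $\{n_m\}\times\{n_s\}\times E^{n_m+n_s}$ and inherits the desired properties from finite powers of $E$. The only cosmetic difference is that the paper constructs an explicit small ball with compact closure via sequential compactness, whereas you phrase this via the clopen decomposition and the product topology on each slice; these are the same argument.
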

    \begin{proof}
        It is an easy exercise to prove that $d_{\wt\XX}$ is a metric on ${\wt\XX}$.
        
        To prove local compactness, it is enough to prove that for each $x\in {\wt\XX}$, there is $\varepsilon>0$ such that $\overline{B_{\wt\XX}(x,\varepsilon)}$ is compact.  Take $x=\Big(n_m^x,n_s^x,(z^x_p)^{n_m^x+n_s^x}_{p=1}\Big)\in {\wt\XX}$, so for each $z_p^x$ there is $\varepsilon_p^x>0$ such that $\overline{B_E(z_p^x,\varepsilon_p^x)}$ is compact (since  $E$ is locally compact metric space). Take now $\varepsilon=\min\{\varepsilon_p^x:p\in\{1,\dots,n_m^x+n_s^x\}\}\wedge \frac12$. Now $B_{\wt\XX}(x,\varepsilon)$ consists of the points $y\in {\wt\XX}$ which have the same number of moving particles as well as the same number of settled particles, while the distances between corresponding particles are less than $\varepsilon$. Since we deal with a metric space ${\wt\XX}$, we use a sequential compactness argument that immediately shows that $\overline{B_{\wt\XX}(x,\varepsilon)}$ is compact.

        To prove separability of ${\wt\XX}$, it is enough to observe that the number of possible combinations of the number of moving particles together with the number of settled particles is countable, and that the space $E$ is separable, so the claim easily follows.
    \end{proof}
    
    \begin{remark}\label{rem:Compactness}
        Here we note that a compact subset $\KK\subset {\wt\XX}$, although it may have an uncountable number of elements, it necessarily contains only { a finite number of different numbers of moving particles}, and for each of these numbers of moving particles, it contains only a finite number of different numbers of settled particles. Namely, if $x,y\in {\wt\XX}$ have a different number of moving particles, i.e. $n_m^x\neq n_m^y$, then $d_{\wt\XX}(x,y)=1$, so if  $\KK$ would contain infinitely many numbers of moving particles, not all open covers would have a finite subcover, which is a contradiction with compactness. A similar argument can be applied to show that the set containing the numbers of moving particles must also be finite.
    \end{remark}

    The space $C_0({\wt\XX},d_{\wt\XX})=C_0\big({\wt\XX}\big)$ now becomes the classical space of continuous functions vanishing at infinity, with respect to the metric $d_{\wt\XX}$: i.e. $f\in C_0\big({\wt\XX}\big)$ if it is continuous from $({\wt\XX},d_{\wt\XX})$ to $\R$, and if for every $\varepsilon>0$, there exists a compact set $\KK\subset {\wt\XX}$ (with respect to the metric $d_{\wt\XX}$) such that $|f(x)|<\varepsilon$, $x\in {\wt\XX}\setminus \KK$.

    To be able to do all technical calculations rigorously, {in addition to the assumption \ref{as:N},} we impose the following assumption of the metastatic process.
    \begin{assumption}{A}{}\label{as:A}
        The Markov process $Z$ is a conservative Feller process on $E$ with the semigroup $(T_t)_{t}$\footnote{In other words, the semigroup $(T_t)_t$ is a strongly continuous semigroup on $C_0(E,d_E)$, {equipped with the sup-norm}, and it holds that $T_t\1=\1$.}. The particles of the metastatic process die only by the exponential mechanism, with the rates $\delta_M\ge 0$ and $\delta_S\in [0,\infty)$, and the settlement happens only via the exponential mechanism with the rate $\mu_S>0$.
    \end{assumption}

    \begin{theorem}[Feller property of the metastatic branching process]\label{t:feller}
        Under \ref{as:N} and \ref{as:A}, the metastatic branching process $\wt X$ is a Feller process on $({\wt\XX},d_{\wt\XX})$.
    \end{theorem}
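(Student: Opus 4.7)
The plan is to exploit that, under Assumption~\ref{as:A}, $\wt X$ is a piecewise-Markov jump process on $\wt\XX$: between events the moving particles run independent copies of the Feller motion $Z$ and the settled ones stand still, while events (settlement, death, birth of a random number of offspring) arrive at a state-dependent but locally constant total rate
\begin{equation*}
\lambda(x)\;=\;n_m^x(\mu_S+\delta_M)+n_s^x(\delta_S+\mu_B).
\end{equation*}
Write $S_t$ for the ``pure-motion'' semigroup on $\wt\XX$ (the product of the Feller semigroup $T_t$ of $Z$ on the moving coordinates and the identity on the settled ones, in each sector), and $Q$ for the Markov kernel describing one event. A first-event decomposition yields
\begin{equation*}
P_tf(x)\;=\;e^{-\lambda(x)t}S_tf(x)+\int_0^t\lambda(x)e^{-\lambda(x)s}S_s\bigl(Q\,P_{t-s}f\bigr)(x)\,ds,
\end{equation*}
which upon iteration becomes a sum over the number of events up to time $t$, whose tail is controlled by Lemma~\ref{lem:FiniteChildren} via the bound on the expected number of living particles.

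\textbf{Continuity of $P_tf$.} Take $x_n\to x$ in $(\wt\XX, d_{\wt\XX})$. For $n$ large $n_m^{x_n}=n_m^x$, $n_s^{x_n}=n_s^x$ and $z_p^{x_n}\to z_p^x$ in $E$ for every $p$, so in particular $\lambda(x_n)=\lambda(x)$. I couple $\wt X^{x_n}$ and $\wt X^x$ on one probability space by sharing the exponential event clocks and---since Assumption~\ref{as:A} makes all event choices position-independent---also the event-type decisions and offspring numbers, while driving each particle's motion by an independent copy of $Z$ from its own initial position. Conditionally on the shared event history, the positions of the surviving particles at time $t$ are a composition of Feller evolutions of $Z$ along the inter-event intervals, with offspring inserted at their parent's current position. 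The Feller property of $Z$ makes this composition continuous in the initial positions, so each conditional expectation $\E[f(\wt X^{x_n}(t))\mid\text{history}]$ converges to $\E[f(\wt X^{x}(t))\mid\text{history}]$; dominated convergence over event histories (with the bound $\|f\|_\infty$ and the integrability provided by Lemma~\ref{lem:FiniteChildren}) then yields $P_tf(x_n)\to P_tf(x)$.

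\textbf{Vanishing at infinity.} Fix $\varepsilon>0$ and a compact $\KK\subset\wt\XX$ with $|f|<\varepsilon$ off $\KK$. By Remark~\ref{rem:Compactness}, $\KK$ meets only finitely many sectors $(n_m,n_s)$ and lies in a bounded subset of $E^{n_m+n_s}$ in each. I need $\P^x(\wt X(t)\in\KK)\to 0$ as $x\to\infty$. Using the branching property, $\wt X^x$ decomposes as the superposition of $n_m^x+n_s^x$ independent sub-processes, one per initial particle. If $n_m^x+n_s^x\to\infty$, a Chebyshev-type estimate---each sub-process contributes at least one particle at time $t$ with positive probability---forces $N_m(t)+N_s(t)\to\infty$ in probability. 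If instead $(n_m^x,n_s^x)$ stays bounded but some $z_p^x\to\infty$ in $E$, the Feller property of $Z$ combined with the inheritance of positions at births (offspring are born at the parent's position) drives all descendants of particle $p$ out of any fixed compact of $E$ at time $t$ in probability. Either way $\wt X(t)\notin\KK$ with high probability, so $P_tf(x)\to 0$.

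\textbf{Strong continuity and main obstacle.} For $\|P_tf-f\|_\infty\to 0$ as $t\searrow 0$, fix $\varepsilon>0$ and a compact $\KK'\subset\wt\XX$ with $|f|<\varepsilon$ off $\KK'$. On $\KK'$ the rate $\lambda$ is bounded (only finitely many sectors appear), so an event in $[0,t]$ occurs with probability $O(t)$; on the no-event event $\wt X(t)$ differs from $x$ only by Feller motions of the moving coordinates, and the strong continuity of $(T_t)_t$ on $C_0(E)$ gives $|f(\wt X(t))-f(x)|\to 0$ uniformly on $\KK'$. Off $\KK'$, both $|f|$ and $|P_tf|$ are $O(\varepsilon)$, the latter uniformly in small $t$ by the vanishing-at-infinity estimate. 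The main obstacle I anticipate is the vanishing-at-infinity step in the regime of bounded sector with escaping positions: one must propagate the spatial escape through the random branching tree during $[0,t]$, which requires using the Feller property of $Z$ along each descendant path jointly with the $L^1$-control on the number of events from Lemma~\ref{lem:FiniteChildren}, to prevent the averaging over the random tree from washing out the escape. Everything else reduces, once the coupling is in place, to standard applications of the Feller property of $Z$ together with the exponential event structure.
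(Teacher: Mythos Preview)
Your approach parallels the paper's closely. Both hinge on the observation that under \ref{as:A} the event clocks and event-type decisions are position-independent, so one may couple $\wt X^{x_n}$ and $\wt X^x$ by sharing all of them while running independent Feller motions from the respective starting points; this is exactly how the paper proves continuity of $P_tf$ (it passes to the limit via the tensor-semigroup Lemma~\ref{lem:TensorProductFellerSemigroup}, whereas you package the same content in a Duhamel iteration). For the ``many initial particles'' case of the vanishing argument you use that each lineage survives with a fixed positive probability; the paper instead bounds the probability of enough deaths/settlements via the binomial estimate \eqref{eq:death-birth-changes}. These are interchangeable. For strong continuity the paper simply checks pointwise convergence and invokes \cite[Lemma~1.4]{LevyMatters3}, which is quicker than your direct uniform argument.

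There is, however, a real gap in your ``bounded sector, some $z_p^x\to\infty$'' case. You argue that the descendants of the escaping particle $p$ leave any fixed compact of $E$, hence $\wt X(t)\notin\KK$ with high probability. But under \ref{as:A} with $\delta_M>0$ (or $\delta_S>0$) the whole lineage of particle $p$ can be extinct by time $t$ with a strictly positive, position-independent probability; on that event $\wt X(t)$ is determined entirely by the non-escaping initial particles and can land in $\KK$ with probability bounded away from zero. Concretely, take $x_n=(2,0,(z_1,z_2^{(n)}))$ with $z_1$ fixed, $z_2^{(n)}\to\infty$, and $f$ supported on sector $(1,0)$ with $f(1,0,\cdot)=g\in C_c(E)$: the scenario ``particle $2$ dies while moving, particle $1$ stays moving'' contributes a constant times $T_tg(z_1)$ to $P_tf(x_n)$, independently of $n$. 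So your conclusion ``either way $\wt X(t)\notin\KK$ with high probability'' is not justified as written. The paper's argument at this step is carried out under the stronger hypothesis that \emph{all} $z_p$ lie outside an auxiliary compact $\widetilde{\mathfrak K}$ (see the sentence preceding \eqref{eq:feller-escaping}), so that every surviving particle at time $t$ necessarily descends from an escaping ancestor; it also relies on the non-trivial Lemma~\ref{ap:l:feller-escaping} to obtain the uniform-in-$s\le t$ bound on $\P^z(Z(s)\in\mathfrak K)$, which is precisely the ``propagation of spatial escape'' you flag as the main obstacle but do not prove. You need to either match that ``all positions escape'' hypothesis (and the corresponding exhausting compact $\wt{\mathcal K}$), or supply a separate argument covering the extinction scenario.
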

    \begin{proof}
    We start by proving that $P_tf \in C_0\big({\wt\XX}\big)$ for $f\in C_0\big({\wt\XX}\big)$ and $t>0$. First, we prove vanishing at infinity. 
    
    Take $f\in C_0\big({\wt\XX}\big)$ and $t>0$, and fix $\varepsilon>0$. Then there exists a compact $\KK\subset {\wt\XX}$ such that $|f(x)|<\varepsilon$ for all $x\notin\KK$. By Remark~\ref{rem:Compactness}, there is a finite number of combinations of the number of moving and settled particles contained in $\mathcal{K}$; here we denote them by $K_i\coloneqq(n^{(i)}_m,n^{(i)}_s)$, for $i=1,\dots,N$, where $N=N(f,\varepsilon)\in \N$ is fixed. Additionally, there exists a compact $\mathfrak{K}\subset E$ such that $|f(n^{(i)}_m,n^{(i)}_s,\cdot)|<\varepsilon$ on $A_{i}^c\coloneqq \big(\otimes_{p=1}^{n^{(i)}_m+n^{(i)}_s}\mathfrak{K}\big)^c$, for all $i=1,\dots,N$.
    In fact, without loss of generality, we may assume that $\mathcal{K}={\cup_{i=1}^N} \{n_m^{(i)}\}\times\{n_s^{(i)}\}\times A_i$, and where we note that the union is disjoint.

    We have for all $x=(n_m,n_s,(z_p)_{p=1}^{n_m+n_s})\in {\wt\XX}$
    \begin{align}
        |P_t f(x)|&\le \varepsilon \P^x(\wt X(t)\notin \mathcal{K})\label{1702b}\\
        &\qquad+
        \sum_{i=1}^{N}\E^x\left[f(\wt X(t));\wt X(t)\in\{n^{(i)}_m \}\times\{n^{(i)}_s\}\times A_i\right].\label{1702a}
    \end{align}
    Since the term \eqref{1702b} is less then $\varepsilon$ for all $x\in {\wt\XX}$, we proceed by finding a compact $\wt{\mathcal{K}}$ such that the term \eqref{1702a} is less then $\varepsilon$ for all $x\notin\wt{\mathcal{K}}$.
    
    First, let us see which configuration of starting numbers of moving and settled particles could indeed produce the configuration pairs $K_i$, $i=1,\dots,N$. It is immediately clear that if we start from $x_0=(n_m,n_s,(z_p)_{p=1}^{n_m+n_s})$ with $n_m+n_s\ge 1$, then we can obtain the pairs $K_i$ in the future by having the right amount of settlements, deaths and births, implying that there are infinitely many of possible predecessors to each of $K_i$. However, by Lemma~\ref{lem:FiniteChildren} it is improbable that too many events (settlements, births, deaths) happen  in the time interval $[0,t]$. More precisely, if we start from $x_0=(n_m+n,n_s+n,(z_p)_{p=1}^{n_m+n_s+2n})$ and we want to reach $x=(n_m,n_s,(z_p)_{p=1}^{n_m+n_s})$ at time $t>0$, there must have been at least $n$ deaths of the settled particles (at least $n$ of those that are settled at time $0$), and at least $n$ deaths or settlements of moving particles (at least $n$ of those that are moving at time $0$). Hence,
    \begin{align}
        \begin{split}\label{eq:death-birth-changes}
            \P^{x_0}&\left(\wt X(t)\in \{n_m\}\times \{n_s\}\times E^{n_m+n_s}\right)\\&\qquad\leq \binom{n_m+n}{n}\left(1-e^{-(\mu_S+ \delta_M) t}\right)^{n} \cdot\binom{n_s+n}{n}\left(1-e^{-\delta_S t}\right)^{n},
        \end{split}
    \end{align}
    since the probability that a (fixed) moving particle dies or settles before time $t>0$ equals
    \begin{align}
    \begin{split}\label{aux1}
        \P&(\text{living particle $Z$ dies or settles before time $t$})\\
        &\qquad=1-\P(\EXP(\mu_S)>t,\EXP(\delta_M)>t)=1-e^{-(\mu_S+ \delta_M) t},
    \end{split}
    \end{align}
    where we recall that $\EXP(\mu_S)$ and $\EXP(\delta_M)$ denote the exponentially distributed random variables corresponding to the settlement and the death while moving.

    By using the assumption \ref{as:A}, the expression  \eqref{eq:death-birth-changes} is arbitrarily small  for $n$ big enough.
    Therefore, for $\varepsilon>0$ from the beginning of the proof, there exists $n_0=n_0(N,\varepsilon)\in \N$ such that for all $x\in {\wt\XX}$ of the form $x=(n,n,(z_p)_{p=1}^{2n})$ where $n> n_0$ it holds that the term \eqref{1702a} is bounded from above by $\varepsilon$.

    Now we focus on the starting positions $x=(n_m,n_s,(z_p)_{p=1}^{n_m+n_s})$ where $\max\{n_m,n_s\}\le n_0$. By Lemma \ref{ap:l:feller-escaping}, take  an another compact $\widetilde{\mathfrak{K}}\subset E$, depending only on $t$, $N$ and $\varepsilon>0$, such that the generic Feller particle $Z$ (the one which drives the moving mechanism of the branching process) satisfies for all $z\notin \wt{\mathfrak{K}}$ and all $s\in [0,t]$
    \begin{align}\label{eq:feller-escaping}
        \P(Z(s)\in\mathfrak{K}|Z(0)=z)<\frac{\varepsilon}{N}.
    \end{align}
    Hence, the term \eqref{1702a} if $z_p\in \widetilde{\mathfrak{K}}^c$, for all $p=1,\dots,n_m+n_s$, is bounded from above by $\varepsilon$, since the living particle at time $t>0$ can be looked at as a single particle that moved from time $0$ till time $t$ as the Feller process $Z$, where we erase some intervals of constancy due to the settlements of the parent particles (which depend on the independent exponential mechanism).

    In other words, we found the compact $\wt{\mathcal{K}}=\cup_{n_m=1}^{n_0}\cup_{n_s=1}^{n_0}\{n_m\}\times\{n_m\}\times \wt{\mathfrak{K}}^{n_m+n_s}$ such that for all $x\notin\wt{\mathcal{K}}$ the term \eqref{1702a} is less then $3\varepsilon$. This finishes the proof of vanishing at infinity.

    Now we show that $P_tf$ is locally continuous. Take $x_k\to x$ in ${\wt\XX}$. This means that there exists $k_0\in\N$  such that for all $k\ge k_0$ the number of moving and the number of settled particles in $x_k$ and $x$ are the same. Thus, without loss of generality, we may assume that $x_k=(n_m,n_s,(z^{(k)}_p(t))_{p=1}^{n_m+n_s})$, i.e., the sequence does not depend on the number of moving and settled particles.
    
    We can couple the processes $\wt X(t)|\wt X(0)=x$ and $\wt X(t)|\wt X(0)=x_n$ such that the death, the settling and the reproduction times are exactly the same (since by \ref{as:A} we assume only the exponential mechanisms of death and settlements). This means that the final positions of particles can differ only due to the positions of the starting particles (and not on the settling/reproduction times). {Here we used again that each particle can be looked at time $t$ as a continuation of some original particle that existed at time $0$ that moved like  the generic Feller process $Z$} where we erase some intervals of constancy from the time horizon that are due to the settlement of the parent particles. Hence, we have
    \begin{align*}
        &P_t f(x_n)=\E^{x_n}[f(\wt X(t))]\\&
        =\E^x[T^{\otimes (N_m(t)+N_s(t))}_{L_1(t), \dots, L_{N_m(t)+N_s(t)}(t)} \wt f_{N_m(t),N_s(t)}\big( Z_1(0), \dots, Z_{N_m(t)+N_s(t)}(0)\big)].
    \end{align*}
    Here $\wt f_{k,l}(\cdot)=f(k,l,\cdot)$, while $L_i(t)$, $i=1,\dots,N_m(t)+N_s(t)$, denote the living times of respective particles started from their birth time until time $t$ (for moving particles) or until their settlement time (for settled particles). 
    
    Now we want to use the dominated convergence theorem to show local continuity of $P_tf$, and we will use Lemma  \ref{lem:TensorProductFellerSemigroup} to do so. We need to show that the times $L_i(t)$ and the birth positions of the alive particles at time $t$ are convergent with respect to the starting positions as $x_n\to x$. Since we use the coupling mechanism, the times $t\mapsto L_i(t)$ are always the same regardless of the starting positions, so they are convergent with respect to the starting positions as $x_n\to x$. We proceed by inductively showing that the positions of the finishing particles at their birth times are convergent with respect to the starting positions as $x_n\to x$.

    Assume first that we have $\omega\in \Omega$ such that there have been no settlements and no births in $[0,t]$. This means that the final living particles are the same as the original ones, and 
    \begin{align}
        &T^{\otimes (N_m(t)+N_s(t))}_{L_1(t), \dots, L_{N_m(t)+N_s(t)}(t)} \wt f_{N_m(t),N_s(t)}\big( Z_1(0), \dots, Z_{N_m(t)+N_s(t)}(0)\big)\\
        &\qquad=T^{\otimes (n_m+n_s)}_{\underbrace{t,\dots,t}_{n_m\text{ times}},0,\dots, 0} \wt f_{n_m,n_s}\big( z_1, \dots, z_{n_m}, z_{n_m+1},\dots, z_{n_m+n_s}\big)
    \end{align}    which is continuous in $x$ by Lemma \ref{lem:TensorProductFellerSemigroup}. If we now take $\omega\in \Omega$ such that only one additional birth event happens, by the particle $n_m< j\le n_m+n_s$. Then the newborn particle has its starting position $z_j$ with the living time $t-E_j$ where $E_j\sim Exp(\mu_B)$. However, birth times are driven by the independent exponential mechanism, so they can be conditionally integrated:
    \begin{align*}
        &T^{\otimes (N_m(t)+N_s(t))}_{L_1(t), \dots, L_{N_m(t)+N_s(t)}(t)} \wt f_{N_m(t),N_s(t)}\big( Z_1(0), \dots, Z_{N_m(t)+N_s(t)}(0)\big)\\
        &=\int_0^t \sum_{k=1}^\infty T^{\otimes (n_m+k+n_s)}_{\underbrace{t,\dots,t}_{n_m\text{ times}},\underbrace{(t-s),\dots,(t-s)}_{k\text{ times}},0,\dots, 0} \wt f_{n_m+k,n_s}\big( z_1, \dots, z_{n_m},\\
        &\hspace{16em}\underbrace{z_j,\dots,z_j}_{k\text{ times}},z_{n_m+1},\dots, z_{n_m+n_s}\big)\P(E_j\in ds).
    \end{align*}
    Hence, also in this set of $\omega\in \Omega$, the tensor semigroup is continuous with respect to its argument. 
    Similarly, assume that $\omega\in \Omega$ is such that only one settlement happens, with no other changes in $[0,t]$ and that this is the particle $i$, $1\le i\le n_m$, that settles (with $\xi_0$ immediate births). Then
     \begin{align*}
        &T^{\otimes (N_m(t)+N_s(t))}_{L_1(t), \dots, L_{N_m(t)+N_s(t)}(t)} \wt f_{N_m(t),N_s(t)}\big( Z_1(0), \dots, Z_{N_m(t)+N_s(t)}(0)\big)\\
        &=\sum_{k=0}^\infty q_k\int_0^t\int_E T^{\otimes (n_m+k+n_s)}_{\underbrace{t,\dots,t}_{n_m-1\text{ times}},\underbrace{t-s,\dots,t-s}_{k\text{ times}},0,\dots, 0,(t-s)} \wt f_{n_m-1+k,n_s+1}\big( z_1, \dots,z_{i-1},z_{i+1},\dots z_{n_m},\\
        &\hspace{7em}\underbrace{z,\dots,z}_{k\text{ times}},z_{n_m+1},\dots, z_{n_m+n_s},z\big)\P(Z(t-s)\in dz|Z(0)= z_i)\P(E_j\in ds),
    \end{align*}
    where we recall $q_k=\P(\xi_0=k)$.
    Now we can use the weak continuity of measures $\P(Z(t-s)\in dz|Z(0)= z_i)$ with respect to the starting position (since the generic process $Z$ is Feller, and since the tensor semigroup is also continuous in the sense of Lemma \ref{lem:TensorProductFellerSemigroup}) to conclude  continuity (by also using the dominated convergence theorem) for all such $\omega\in \Omega$.
    
    {Now, it is clear that  a similar argument can be applied, in general, to all $\omega\in \Omega$ with only a finite number of events (settlements, births, deaths) happened in time $[0,t]$. However, this is an almost sure event by Lemma }\ref{lem:FiniteChildren}. Therefore, we conclude by using the dominated convergence theorem that $P_tf(x_k)\to P_tf(x)$, i.e., $P_tf$ is locally continuous.

    It remains to show that $P_t f(x)\to f(x)$, as $t\to0$, for all $x\in {\wt\XX}$, which by \cite[Lemma 1.4]{LevyMatters3} implies the strong continuity. Take $f\in C_0\big({\wt\XX}\big)$ and $x=(n_m,n_s,(z_p)_{p=1}^{n_m+n_s})\in {\wt\XX}$. Since $f$ is bounded and continuous, we can apply the dominated convergence theorem to prove
        \begin{align}
            P_t f(x)=\E^x[f(X(t))]\to f(x),
        \end{align}
        because, almost surely, the configuration of moving and settled particles at small time $t$ becomes the same as at time 0, while the moving particles move with right-continuous trajectories.
    \end{proof}

    \begin{remark}
        We can also prove the Feller property of $X$ by abandoning some of the assumptions of \ref{as:A}.
        
        For example, the parameter $\delta_S=\infty$ can also be considered, and this is the case where there are no settled particles, i.e., $N_s(t)=0$ for all $t\ge 0$. The proof of the Feller property in this case  remains almost the same, since we need to focus on the process with values in $\cup_{n=0}^\infty\{n\} \times E^{n} $.
        
        On the other hand, if the rate $\mu_S=0$, then the particles never settle, and we do not have a branching process, but in essence just the (killed) Feller process $Z$, so $\mu_S>0$ is indispensable if we want to keep the branching mechanism. 
    \end{remark}

    \begin{remark}[{On the non-Feller property of $X$}]\label{r:notFeller}
     {It is interesting that the branching process tracking the particles' positions, as described in \eqref{eq:def-X}, cannot be a Feller process under what appears to be the most reasonable metric on the space $\XX$ as described in \eqref{eq:def-statespace}.}
        Indeed, in \eqref{eq:def-statespace} we have essentially three components: a set $K$ denoting the labels of the living particles and as such should be dealt with the discrete metric; the positions of the living particles $(z_1,\dots, z_{|K|})$ which should be dealt with the inherited metric on the space $E$; and the set of statuses $\big((s_1,c_1),\dots,(s_{|K|},c_{|K|})\big)$ which should be dealth with the discrete metric on the statuses $s_i$, and on the number of offsprings $c_i$'s we should have either the discrete metric or the difference in number of offsprings. In other words, the metric should be (or should be equivalent to)
        \begin{align*}
             d_\XX(x,y)&=\max\Big\{\1_{\{K^x\neq K^y\}},\1_{\{\mathbf{s}^x\neq\mathbf{s}^y\}},\1_{\{\mathbf{c}^x\neq\mathbf{c}^y\}}\Big\} \\
        &\qquad\qquad+\1_{\{K^x{=} K^y\}}\1_{\{\mathbf{s}^x{=}\mathbf{s}^y\}}\1_{\{\mathbf{c}^x{=}\mathbf{c}^y\}}\left(\sum_{p\in K_x }d_E(z^x_{p}, z^y_{p})\right)\wedge 1,
        \end{align*}
        where we used the {obvious} notation $x=(K_x,\mathbf{z}^x, \mathbf{s}^x,\mathbf{c}^x)$. It can be easily seen that $d_\XX$ is indeed a metric on $\XX$ {making it a locally compact and separable metric space.}
        
        However, the place where the Feller property falls apart is in the vanishing at infinity of $P_tf$, for $f\in C_0(\XX, d_\XX)$ and $t>0$. Indeed, by repeating the proof of Theorem \ref{t:feller}, we arrive at the corresponding paragraph before \eqref{eq:death-birth-changes}, where we need to prove that only final number of starting label sets can indeed produce (with high enough probability) the labels $K_i$, $i=1,\dots,N$, those on which it is possible that $|f|\ge \varepsilon$, for some prefixed 
        $\varepsilon>0$. Unfortunately, {the label set, e.g.,  $K=\{(1)\}$ can be obtained from all  $\{(1),(k)\}$, $k\ge 2$, by just one death, which breaks the needed property of vanishing at infinity.}

        To make everything more precise, suppose that the generic Feller movement $Z$ is just the one-dimensional standard Brownian motion, and all the other mechanisms are exponential ones. Take now $f\in C_0(\XX, d_\XX)$ such that $f\equiv 0$ except for $f(\{1\},\cdot,\{0\}\times \{0\})=\wt f(\cdot)\in C_0(\R)$, e.g. $\wt f(x)=e^{-x^2}$. Now for e.g. $t=1$, and all $x_k=(\{(1),(k)\},(0,0),\{(0,0), (0,0)\})$, for $k\ge 2$, we have
        \begin{align*}
            P_tf(x_k)=\E^{x_k}[f(X_1)]\ge (1-e^{\delta_M})e^{-\delta_S }e^{-(\delta_M+\delta_S) }\E^0[\wt f(Z_1)]\eqqcolon C(\delta_M,\delta_S,\wt f)>0,
        \end{align*}
        where the first two exponentials say that the particle labeled with $(k)$ died before $t=1$ but did not settle before $t=1$, while the third exponential says that the particle labeled $(1)$ did not die nor settle before $t=1$. In other words, $P_tf$ does not vanish at infinity since we found one sequence $(x_k)_k$ which cannot be contained in any compact set, and for which $P_tf(x_k)$ does not go to 0 as $k$ goes to infinity.
    \end{remark}

	\section{Generator}\label{s:generator}
    {With the Feller property at hand, we can move to the next goal. In this section, we derive} the infinitesimal generator of the metastatic branching process $\wt X$. We keep the notation and the representation of the process as in Section \ref{s:feller}, i.e.  $\wt X$ is given as in \eqref{eq:modified-X-repr}.

    The infinitesimal generator of some strongly continuous semigroup $(P_t)_t$ on some Banach space $(\BB,\|\cdot\|)$ is given as
    \begin{align}\label{eq:generator}
        \mathcal{A}f=\lim_{t\to 0}\frac{P_tf-f}{t},\quad f\in \DD(\mathcal{A}),
    \end{align}
    where the limit is considered in the strong sense of the Banach space $(\BB,\|\cdot\|)$, and $\DD(\mathcal{A})$ is the domain of the operator $\mathcal{A}$, i.e. the space of all functions in $\BB$ for which the limit in \eqref{eq:generator} exists.

    From the general theory of the Feller processes/semigroups (i.e., on $C_0(E)$), it is known that to obtain the infinitesimal generator, one has to study only pointwise limits, see \cite[Theorem 1.33]{LevyMatters3}, and this is also our approach to obtaining the infinitesimal generator of the branching metastatic process.

    Recall that, under \ref{as:A}, the generic process $Z$ is Fellerian, so it is associated with the strongly continuous semigroup $(T_t)_t$ on $C_0(E)$, and we denote its infinitesimal generator by $(G,\DD(G))$. In the following theorem, for a suitable function $f\in C_0\big(\wt \XX\big)$, we denote by $G_{z_i}f(x)$ the act of the generator on the function $f$ with respect to the position $z_i$ in $x=(n_m,n_s,(z_p)_{p=1}^{n_m+n_s})$.

    \begin{theorem}\label{t:generator}
        Under \ref{as:N} and \ref{as:A}, the infinitesimal generator $(\AA,\DD(\AA))$ of the metastatic branching process $\wt X$ on $C_0\big(\wt \XX,d_{\wt \XX}\big)$ is given by
        \begin{align*}
            &\AA_{|{\mathfrak{D}}} f(n_m,n_s,(z_p)_{p=1}^{n_m+n_s})=\\
            &\sum_{i=1}^{n_m}\delta_M \Big(f\big(n_m-1,n_s,(z_1,\dots,z_{i-1},z_{i+1},\dots,z_{n_m+n_s})\big)-f(n_m,n_s,(z_p)_{p=1}^{n_m+n_s})\Big)\\
            &+\sum_{i=1}^{n_m}\sum_{k=0}^\infty q_k\mu_S\Big(f\big({n_m-1+k},n_s+1,(z_1,\dots,z_{i-1},z_{i+1},\dots,z_{n_m},\underbrace{z_i,\dots,z_i}_{\text{$k$  times}},z_{n_m+1},\dots, z_{n_m+n_s},z_{i})\big)\notag\\
            &\hspace{20em}-f(n_m,n_s,(z_p)_{p=1}^{n_m+n_s})\Big)\\
            &+\sum_{j=n_m+1}^{n_m+n_s}\delta_S\Big(f\big({n_m,n_s-1},(z_1,\dots,z_{i-1},z_{i+1},\dots,z_{n_m+n_s})\big)-f(n_m,n_s,(z_p)_{p=1}^{n_m+n_s})\Big)\\
            &+\sum_{j=n_m+1}^{n_m+n_s}\sum_{k=1}^\infty\nu(\{k\})\Big(f\big(n_m+k,n_s,(z_1,\dots,z_{n_m},\underbrace{z_{j},\dots,z_j}_{\text{$k$ times}} , z_{n_m+1},\dots,z_{n_m+n_s})\big)\\
            &\hspace{20em}-f(n_m,n_s,(z_p)_{p=1}^{n_m+n_s})\Big)\\
            &+\sum_{i=1}^{n_m}G_{z_i} f(n_m,n_s,(z_p)_{p=1}^{n_m+n_s}),
        \end{align*}
        where $\mathfrak{D}\subset \DD(\AA)$ consists of all $f\in C_0(\wt \XX,\wt d_{\wt \XX})$ such that for all $(n_m,n_s)\in \N_0^2$ it holds that $z_i\mapsto f(n_m,n_s,(z_p)_{p=1}^{n_m+n_s})\in \DD(G)$ and $G_{z_i}f(n_m,n_s,(z_p)_{p=1}^{n_m+n_s})\in C_0(\wt X,\wt d_{\wt X})$.
    \end{theorem}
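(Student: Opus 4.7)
The plan is to exploit the Feller property established in Theorem~\ref{t:feller} together with \cite[Theorem 1.33]{LevyMatters3}, which reduces the verification of $\mathfrak{D}\subset \DD(\AA)$ and the identification of $\AA f$ on $\mathfrak{D}$ to the pointwise computation of
\begin{equation*}
    \lim_{t\to 0} \frac{P_t f(x) - f(x)}{t}
\end{equation*}
for each $x=(n_m,n_s,(z_p)_{p=1}^{n_m+n_s})\in \wt\XX$, together with the check that the limit defines a function in $C_0\big(\wt\XX\big)$ — which is exactly the membership condition built into the definition of $\mathfrak{D}$.

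The core step is a decomposition based on the time $\tau$ of the first ``discrete event'' in $[0,t]$, namely the death of a moving particle (rate $\delta_M$ per particle), the settlement of a moving particle with the immediate production of $\xi_0$ offsprings (rate $\mu_S$), the death of a settled particle (rate $\delta_S$), or a $k$-offspring reproduction jump of a settled particle (rate $\nu(\{k\})$). By the independence of all the exponential and compound Poisson clocks attached to the living particles, $\tau$ is exponential with total rate
\begin{equation*}
    R=n_m(\delta_M+\mu_S)+n_s(\delta_S+\mu_B),
\end{equation*}
and its type is chosen by the competing-exponentials weights. On $\{\tau>t\}$ the settled particles stay put while the moving ones evolve independently as $Z$, giving
\begin{equation*}
    \E^x[f(\wt X(t));\tau>t] = e^{-Rt}\, T_t^{\otimes n_m}\wt f_{n_m,n_s}(z_1,\dots,z_{n_m+n_s}),
\end{equation*}
with $\wt f_{n_m,n_s}(\cdot)=f(n_m,n_s,\cdot)$. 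Using the telescoping identity $T_t^{\otimes n_m}-I=\sum_{i=1}^{n_m} T_t^{\otimes(i-1)}\otimes(T_t-I)\otimes I^{\otimes(n_m-i)}$ together with the coordinatewise hypotheses $\wt f_{n_m,n_s}\in \DD(G)$ and $G_{z_i}f\in C_0\big(\wt\XX\big)$, this contribution satisfies
\begin{equation*}
    \frac{e^{-Rt}\, T_t^{\otimes n_m}\wt f_{n_m,n_s}(x)-f(x)}{t}\xrightarrow[t\to 0]{}-Rf(x)+\sum_{i=1}^{n_m}G_{z_i}f(x).
\end{equation*}

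For the complementary $\{\tau\le t\}$ contribution I would apply the strong Markov property at $\tau$ together with the strong continuity of $P$ on $C_0\big(\wt\XX\big)$ (from Theorem~\ref{t:feller}) to write
\begin{equation*}
    \E^x[f(\wt X(t));\tau\le t]=\int_0^t \sum_{\text{types}}\lambda\, e^{-Rs}\, \E\!\left[P_{t-s}f(\wt X(s^+))\,\big|\,\text{event of that type at }s\right]ds + o(t),
\end{equation*}
where $\lambda$ runs over the rates $\delta_M$, $\mu_S q_k$, $\delta_S$, $\nu(\{k\})$, each equipped with the appropriate particle index and (where applicable) offspring count $k$. For each fixed event type, right-continuity of $Z$ makes the post-event configuration $\wt X(s^+)$ converge almost surely, as $s\to 0$, to the corresponding discrete modification of $x$ (remove, settle-and-copy, or duplicate a coordinate), while $P_{t-s}f\to f$ uniformly as $t\to 0$; dominated convergence then yields
\begin{equation*}
    \frac{\E^x[f(\wt X(t));\tau\le t]}{t}\xrightarrow[t\to 0]{}\sum_{\text{types}}\lambda\cdot f\big(\text{post-event configuration at }s=0\big).
\end{equation*}
Adding the two limits, the $-Rf(x)$ term redistributes as $-\sum_{\text{types}}\lambda\cdot f(x)$ and regroups with each $\lambda\cdot f(\text{post-event})$ into the four jump differences $\lambda\big(f(\text{post-event})-f(x)\big)$ displayed in the statement, while the tensor-semigroup piece supplies the final $\sum_{i=1}^{n_m} G_{z_i}f$ line.

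The main technical obstacle lies in controlling the leading-order behavior of the $\{\tau \le t\}$ contribution: one needs (a) the $o(t)$ estimate for configurations with two or more events in $[0,t]$, which follows from the Poisson structure of the event clocks combined with the a.s.\ finiteness and the expectation bound of $|\MM(t)|$ from Lemma~\ref{lem:FiniteChildren}, and (b) the exchange of the infinite sums over $k$ (settlement offsprings and reproduction-jump sizes) with the limit $t\to 0$, handled by assumption \ref{as:N} (namely $\E\xi_0<\infty$ and $\sum_k \nu(\{k\})=\mu_B<\infty$) together with the boundedness of $f$. The final passage from pointwise convergence to $\mathfrak{D}\subset \DD(\AA)$ is then delivered by \cite[Theorem 1.33]{LevyMatters3}, once the candidate generator is seen to lie in $C_0\big(\wt\XX\big)$ — which is exactly the condition $G_{z_i}f\in C_0\big(\wt\XX\big)$ baked into the definition of $\mathfrak{D}$.
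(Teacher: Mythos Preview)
Your proposal is correct and follows essentially the same approach as the paper: both invoke \cite[Theorem 1.33]{LevyMatters3} to reduce to pointwise limits, decompose $\E^x[f(\wt X(t))]$ according to whether zero, one, or more discrete events occur in $[0,t]$, and identify the same five groups of terms. The paper carries this out by directly computing the probability of each single-event scenario (its events $D_i$, $S_i$, $D_j$, $B_j$, $N_0$) and invoking Lemma~\ref{ap:l:generator} for the tensor-semigroup piece, whereas you package the same computation via the first-jump time $\tau$ and the strong Markov property; one small inconsistency in your write-up is the stray ``$+\,o(t)$'' in the displayed first-jump formula (that identity is exact once $P_{t-s}$ absorbs all subsequent evolution, so the separate two-event estimate you list under (a) is not needed in your formulation).
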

    \begin{proof}
        By \cite[Theorem 1.33]{LevyMatters3}, it is enough to study the pointwise limits in \eqref{eq:generator} and see for which functions $f$ the limit exists and gives as a result a $C_0(\XX)$ function.

        To this end, take $f\in C_0(\XX)$ and $x=(n_m,n_s,(z_p)_{p=1}^{n_m+n_s})\in \XX$. We study the expression $\frac{P_tf(x)-f(x)}{t}=\frac{\E^xf(X(t))-f(x)}{t}$. 
        
        Assume that in the interval $[0,t]$ only the $i$-th, for $1\le i\le n_m$, moving particle dies, while the rest of them do not change in any way (except for the positions of the moving particles), and call this event $D_i$. Since the changes are independent of positions and other particles, we have
        \begin{align}
            &\frac{\E^x[f(X(t));D_i]}{t}\nonumber\\
            &=\E^x[f(n_m-1,n_s,(Z_1(t),\dots,Z_{i-1}(t),Z_{i+1}(t),\dots,, Z_{n_m}(t), z_{n_m},\dots, z_{n_m+n_s}));D_i]/t\nonumber\\
            &=\frac{(1-e^{-\delta_Mt})}{t} e^{-(n_m-1)(\delta_M+\mu_S) t}e^{-(n_s)(\delta_S+\mu_B) t}\nonumber\\&
            \hspace{2em}\times \E^x[f(n_m-1,n_s,(Z_1(t),\dots,Z_{i-1}(t),Z_{i+1}(t),\dots,, Z_{n_m}(t), z_{n_m},\dots, z_{n_m+n_s}) )]\nonumber\\
            &\to \delta_M f\big(n_m-1,n_s,(z_1,\dots,z_{i-1},z_{i+1},\dots,z_{n_m+n_s})\big),\quad \text{as $t\searrow 0$,}\label{death-limit}
        \end{align}
        and note that there are no other regularity assumptions on $f$.

        Similarly, if we assume that only the settled particle $j$ dies in $[0,t]$, while all the other do not change statuses, the event which we call $D_j$ with $n_m<j\le n_m+n_s$, then we get
        \begin{align}
            &\frac{\E^x[f(X(t));D_j]}{t}\nonumber\\
            &\to \delta_S f\big(n_m,n_s-1,(z_1,\dots,z_{j-1},z_{j+1},\dots,z_{n_m+n_s})\big),\quad \text{as $t\searrow 0$.}\label{death-limit-settle}
        \end{align}
        
        Further, assume now that only the moving particle $i$ settles in $[0,t]$, gives birth to $\xi_0$ offspring and does not produce additional new offspring, while all the other do not change statuses. The probability that the particle $i$ settles in $[0,t]$ and does not have additional offsprings besides $\xi_0$ equals {to
        \begin{align}
            &\P^x(\EXP_p(\mu_S)\le t, \EXP_p(\mu_B)\ge t-\EXP_p(\mu_S))\\
            &=\int_0^t \left(\int_{t-s}^\infty\mu_Be^{-\mu_B h}dh\right)\mu_S e^{-\mu_S s}ds=e^{-\mu_B t}\frac{\mu_S}{\mu_S-\mu_B}\big(1-e^{-(\mu_s-\mu_b)t}\big),   
        \end{align}
        for any $x$.} Therefore, it is easy to see that we get as $t\searrow 0$ that
        \begin{align}
            &\frac{\E^x[f(X(t));S_i]}{t}\nonumber\\
            &\to \sum_{k=0}^\infty q_k\mu_S f\big({n_m-1+k},n_s+1,(z_1,\dots,z_{i-1},z_{i+1},\dots,z_{n_m},\underbrace{z_i,\dots,z_i}_{\text{$k$  times}},z_{n_m+1},\dots, z_{n_m+n_s},z_{i})\big),\label{limit-settle}
        \end{align}
        where we recall the notation $\P(\xi_0=k)=q_k$.

        Further, if we look at the event that only the settled particle $j$, where $n_m<j\le n_m+n_s$, produced the offspring(s) in $[0,t]$, while all the other did not make a change in the statuses, the event denoted by $B_j$, we have
        \begin{align}
            &\frac{\E^x[f(X(t));B_j]}{t}\nonumber\\
            &=\sum_{k=1}^{\infty}\E^x[f(n_m+k,n_s,(Z_1(t),\dots,Z_{n_m}(t),\underbrace{\wt Z_{j}(t),\dots,\wt Z_j(t)}_{\text{$k$ times}} , z_{n_m+1},\dots,z_{n_m+n_s}));B_j]/t\nonumber\\
            &=\sum_{k=1}^{\infty}e^{-n_m(\delta_M+\mu_S)t}e^{-(n_s-1)(\delta_S+\mu_B)t}\frac{(1-e^{-\mu_B t})}{t}\frac{\nu(\{k\})}{\mu_B} \times\notag\\
            &\hspace{4em}\times \E^x[f(n_m+k,n_s,(Z_1(t),\dots,Z_{n_m}(t),\underbrace{\wt Z_{j}(t),\dots,\wt Z_j(t)}_{\text{$k$ times}} , z_{n_m+1},\dots,z_{n_m+n_s})]\nonumber\\
            &\to \sum_{k=1}^\infty\nu(\{k\})f\big(n_m+k,n_s,(z_1,\dots,z_{n_m},\underbrace{z_{j},\dots,z_j}_{\text{$k$ times}} , z_{n_m+1},\dots,z_{n_m+n_s})\big),\quad \text{as $t\searrow 0$,}\label{birth-gen}
        \end{align}

        If in the interval $[0,t]$ at least two of the former events (settlement, birth, death) happen, for the corresponding ratio we have $$\frac{\E^x[f(X(t));\text{at least two events happen}]}{t}\to 0.$$
        The argument is as follows: if at least two events happen, then by repeating the  calculations as in \eqref{death-limit} and \eqref{birth-gen} (in particular, the the third lines therein) will contain at least two exponentials of the form $1-e^{-\delta_M t}$ or $1-e^{-\delta_S t}$ or $1-e^{-\mu_B t}$ in the numerator, while $t$ will stay in the denominator, so the whole expression will be bounded from above by e.g. $\|f\|\frac{1-e^{-\delta_M t}}{t}(1-e^{-\delta_S t})\to0$.

        On the other hand, if we assume that there are no events in $[0,t]$ but only movements, the event which we denote by $N_0$, we have
        \begin{align}
            &\E^x[f(X(t));N_0]\nonumber\\
            &=e^{-n_m(\delta_M+\mu_S)t}e^{-n_s(\delta_S+\mu_B)t}\E^x[f(n_m,n_s,( Z_1(t),\dots,Z_{n_m}(t), z_{n_m+1},\dots,z_{n_m+n_s}))].
        \end{align}

        Thus, for $f\in C_0(\wt \XX,\wt d_{\wt \XX})$ such that for all $(n_m,n_s)\in \N_0^2$ it holds that $z_i\mapsto f(n_m,n_s,(z_p)_{p=1}^{n_m+n_s})\in \DD(G)$ and $G_{z_i}f(n_m,n_s,(z_p)_{p=1}^{n_m+n_s})\in C_0(\wt X,\wt d_{\wt X})$, we have by Lemma \ref{ap:l:generator} 
        \begin{align}
        \begin{split}\label{gengen}
            &\frac{\E^x[f(X(t));N_0]-e^{-n_m(\delta_M+\mu_S)t}e^{-n_s(\delta_S+\mu_B)t}f(x)}{t}\\&\qquad\qquad\to \sum_{i=1}^{n_m}G_{z_i} f(n_m,n_s,(z_p)_{p=1}^{n_m+n_s}),\quad \text{as $t\searrow0$.}
        \end{split}
        \end{align}

        Therefore, by noticing that 
        $$\lim_{t\searrow0}\frac{e^{-n_m(\delta_M+\mu_S)t}e^{-n_s(\delta_S+\mu_B)t}f(x)-f(x)}{t}=f(x)\big(n_m(\delta_M+\mu_S)+n_s(\delta_S+\mu_B)\big),$$
        and combining with what was calculated in \eqref{death-limit}, \eqref{death-limit-settle}, \eqref{limit-settle}, \eqref{birth-gen}, and \eqref{gengen}, we obtain that 
        \begin{align*}
            &\frac{P_tf(x)-f(x)}{t}=\frac{\E^xf(X(t))-f(x)}{t}\\
            &\to \sum_{i=1}^{n_m}\delta_M \Big(f\big(n_m-1,n_s,(z_1,\dots,z_{i-1},z_{i+1},\dots,z_{n_m+n_s})\big)-f(n_m,n_s,(z_p)_{p=1}^{n_m+n_s})\Big)\\
            &+\sum_{i=1}^{n_m}\sum_{k=0}^\infty q_k\mu_S\Big(f\big({n_m-1+k},n_s+1,(z_1,\dots,z_{i-1},z_{i+1},\dots,z_{n_m},\underbrace{z_i,\dots,z_i}_{\text{$k$  times}},z_{n_m+1},\dots, z_{n_m+n_s},z_{i})\big)\notag\\
            &\hspace{20em}-f(n_m,n_s,(z_p)_{p=1}^{n_m+n_s})\Big)\\
            &+\sum_{j=n_m+1}^{n_m+n_s}\delta_S\Big(f\big(n_m,{n_s-1},(z_1,\dots,z_{i-1},z_{i+1},\dots,z_{n_m+n_s})\big)-f(n_m,n_s,(z_p)_{p=1}^{n_m+n_s})\Big)\\
            &+\sum_{j=n_m+1}^{n_m+n_s}\sum_{k=1}^\infty\nu(\{k\})\Big(f\big(n_m+k,n_s,(z_1,\dots,z_{n_m},\underbrace{z_{j},\dots,z_j}_{\text{$k$ times}} , z_{n_m+1},\dots,z_{n_m+n_s})\big)\\
            &\hspace{20em}-f(n_m,n_s,(z_p)_{p=1}^{n_m+n_s})\Big)\\
            &+\sum_{i=1}^{n_m}G_{z_i} f(n_m,n_s,(z_p)_{p=1}^{n_m+n_s}), \quad \text{as $t\searrow 0$,}
        \end{align*}
        for all $f\in C_0(\wt \XX,\wt d_{\wt \XX})$ such that for all $(n_m,n_s)\in \N_0^2$ it holds that $z_i\mapsto f(n_m,n_s,(z_p)_{p=1}^{n_m+n_s})\in \DD(G)$ and $G_{z_i}f(n_m,n_s,(z_p)_{p=1}^{n_m+n_s})\in C_0(\wt X,\wt d_{\wt X}).$
    \end{proof}
	
	\paragraph*{Acknowledgments}
	The authors acknowledge financial support under the National Recovery and Resilience Plan (NRRP), Mission 4, Component 2, Investment 1.1, Call for tender No. 104 published on 2.2.2022 by the Italian Ministry of University and Research (MUR), funded by the European Union – NextGenerationEU– Project Title “Non–Markovian Dynamics and Non-local Equations” – 202277N5H9 - CUP: D53D23005670006 - Grant Assignment Decree No. 973 adopted on June 30, 2023, by the Italian Ministry of University and Research (MUR).

    Bruno Toaldo is partially supported by Gruppo Nazionale per l’Analisi Matematica, la Probabilità e le loro Applicazioni (GNAMPA-INdAM).
	
	\appendix
	
	\section{Tensor semigroups}
    Here, we consider a locally compact separable metric space $E$, and a strongly continuous semigroup of operators on $C_0(E)$.
		\begin{definition}[Tensor product of a Feller semigroup] \label{def:TensorFeller}
			Let $n\in\N$ and $(T_t)_t$ be a semigroup on $C_0(E)$. Then, we define the $n$-fold tensor product of $(T_t)_t$ at times $t_1, \dots, t_n\in[0,\infty)$ as $T^{\otimes n}_{t_1, \dots, t_n}$ with
			\begin{equation*}
				\left(T^{\otimes n}_{t_1, \dots, t_n} v\right)(x_1, \dots, x_n) = \int_E \cdots \int_E v(y_1, \dots, y_n) p_{t_1}(x_1, dy_1) \cdots p_{t_n}(x_n, dy_n), 
			\end{equation*}
			for all $(x_1,\dots,x_n)\in E^n$ and for all $v \in C_0(E^n)$.
		\end{definition}
		
		\begin{lemma}[Tensor products of a Feller semigroups are Feller]\label{lem:TensorProductFellerSemigroup}
			Let $(T_t)_t$ be a Feller semigroup on $C_0(E)$ with the transition kernel $p_t(x,dy)$, $t\ge0$, $x\in E$. Then, the $n$-fold tensor product of $(T_t)_t$ at times $t_1, \dots, t_n$, denoted by $T^{\otimes n}_{t_1, \dots, t_n}$ as in Definition~\ref{def:TensorFeller}, preserves the Feller properties in the following sense: $T^{\otimes n}_{t_1, \dots, t_n}$ is non-negative contraction; it satisfies $T^{\otimes n}_{t_1, \dots, t_n}(T^{\otimes n}_{t_1, \dots, t_n})=T^{\otimes n}_{t_1+s_1, \dots, t_n+s_n}$ and $T^{\otimes n}_{t_1, \dots, t_n}(C_0(E^n))\subseteq C_0(E^n)$; and it is strongly continuous, i.e. for every $v\in C_0(E^n)$ and every $\epsilon > 0$, there exists $\delta > 0$ such that if $\max_{i=1, \dots, n} t_i < \delta$, we have
				\begin{equation*}
					\left \lVert T_{t_1, \dots, t_n}^{\otimes n} v - v \right\rVert_{\infty} < \epsilon.
				\end{equation*}
		\end{lemma}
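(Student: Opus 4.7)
The plan is to verify each of the four stated properties in turn, exploiting the product structure of the defining kernel to reduce essentially everything to a density argument via elementary tensor functions. Non-negativity and the contraction estimate $\|T^{\otimes n}_{t_1,\dots,t_n}v\|_\infty \le \|v\|_\infty$ follow directly from the fact that each $p_{t_i}(x_i,\cdot)$ is a sub-probability measure on $E$, so pointwise bounds on $v$ transfer to the iterated integrals defining $T^{\otimes n}_{t_1,\dots,t_n}v$. The semigroup identity $T^{\otimes n}_{t_1,\dots,t_n}T^{\otimes n}_{s_1,\dots,s_n}=T^{\otimes n}_{t_1+s_1,\dots,t_n+s_n}$ follows by applying Fubini to the composed iterated integrals and invoking, coordinatewise, the Chapman--Kolmogorov relation $\int p_{s_i}(x_i,dz_i)p_{t_i}(z_i,dy_i)=p_{s_i+t_i}(x_i,dy_i)$ inherited from the semigroup property of $(T_t)_t$.

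For preservation of $C_0(E^n)$, I would first treat elementary tensors $v=f_1\otimes\cdots\otimes f_n$ with $f_i\in C_0(E)$, for which Fubini gives
\begin{equation*}
T^{\otimes n}_{t_1,\dots,t_n}v(x_1,\dots,x_n)=\prod_{i=1}^n(T_{t_i}f_i)(x_i).
\end{equation*}
Each factor $T_{t_i}f_i$ lies in $C_0(E)$ by the Feller property of $(T_t)_t$, and the product viewed on $E^n$ is in $C_0(E^n)$: given $\varepsilon>0$, choose compact $K_i\subset E$ with $|T_{t_i}f_i|<\varepsilon$ outside $K_i$; outside the compact set $K_1\times\cdots\times K_n$, at least one coordinate escapes its $K_i$, forcing the product to be small. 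The locally compact Stone--Weierstrass theorem ensures that the algebra generated by elementary tensors (which separates points and vanishes nowhere) is sup-norm dense in $C_0(E^n)$; combined with the contraction property just established for $T^{\otimes n}_{t_1,\dots,t_n}$, a standard $3\varepsilon$ argument extends the $C_0$ property to all $v\in C_0(E^n)$.

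Strong continuity is handled similarly. For elementary tensors, the telescoping identity
\begin{equation*}
\prod_{i=1}^n T_{t_i}f_i - \prod_{i=1}^n f_i = \sum_{i=1}^n\Bigl(\prod_{j<i}T_{t_j}f_j\Bigr)(T_{t_i}f_i-f_i)\Bigl(\prod_{j>i}f_j\Bigr),
\end{equation*}
combined with the contraction bounds $\|T_{t_j}f_j\|_\infty\le\|f_j\|_\infty$ and the strong continuity of $(T_t)_t$ on $C_0(E)$ applied to each $f_i$, yields $\|T^{\otimes n}_{t_1,\dots,t_n}v-v\|_\infty\to 0$ as $\max_i t_i\to 0$. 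Density of elementary tensors in $C_0(E^n)$, together with the uniform contraction of $T^{\otimes n}_{t_1,\dots,t_n}$, then transports the convergence to all of $C_0(E^n)$ by a standard approximation argument.

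The subtlest step is the $C_0(E^n)$-preservation, which hinges on the density of elementary tensors in $C_0(E^n)$; this requires the locally compact version of Stone--Weierstrass, and one must verify that the tensor algebra separates points and vanishes nowhere on $E^n$ (both immediate, since $C_0(E)$ already has these properties on the locally compact Hausdorff space $E$). Once density is in place, the remaining properties reduce to routine manipulation of product kernels together with the one-dimensional Feller properties of $(T_t)_t$.
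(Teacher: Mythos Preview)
Your proposal is correct and follows essentially the same route as the paper: the paper explicitly identifies preservation of $C_0(E^n)$ and strong continuity as the nontrivial parts and points to the locally compact Stone--Weierstrass theorem as the key tool, which is exactly your approach of reducing to elementary tensors and extending by density plus contraction. You have simply filled in the details the paper leaves to the reader.
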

		
		\begin{proof}
			The proof is almost straightforward from the definitions and is left to the reader. The nontrivial part is the preservation of $C_0(E^n)$ and the strong continuity which can be proved by Stone-Weierstrass' theorem of approximation  for locally
			compact spaces, see e.g. \cite[Theorem A.0.5]{applebaum-semigroups}. 
		\end{proof}

        \section{Auxiliary results}

        \begin{lemma}\label{ap:l:feller-escaping}
            Let $(Z(t))_{t}$ be a Feller process on locally compact separable metric space $E$. Then for $\varepsilon>0$, $t>0$ and a compact $K\subset E$, there exists an another compact $\wt K\subset E$ such that
            \begin{align}
                \P^x(Z(s) \in K)<\varepsilon,\quad x\notin \wt K, \, s\le t.   
            \end{align}
        \end{lemma}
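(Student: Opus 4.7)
The plan is to dominate the indicator $\1_K$ by a continuous function and then use strong continuity of the Feller semigroup $(T_s)_s$ to turn pointwise vanishing at infinity into vanishing that is uniform over the time interval $[0,t]$. By Urysohn's lemma on the locally compact Hausdorff space $E$, there exists $f\in C_c(E)\subset C_0(E)$ with $\1_K\le f\le 1$, and hence
$$\P^x(Z(s)\in K)\le \E^x[f(Z(s))]=T_s f(x)$$
for every $s\ge 0$ and every $x\in E$. It therefore suffices to produce a compact $\wt K\subset E$ such that $T_s f(x)<\varepsilon$ for all $s\in[0,t]$ and all $x\notin \wt K$.

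To get such uniformity in $s$, I would first note that the Feller hypothesis makes the map $s\mapsto T_s f$ continuous from $[0,t]$ to $(C_0(E),\|\cdot\|_\infty)$: strong continuity gives it at $s=0$, and for $0\le r\le s\le t$ the identity $T_sf - T_rf = T_r(T_{s-r}f - f)$ together with the contractivity of $T_r$ propagates continuity along $[0,t]$. Consequently, the image $\mathcal{F}:=\{T_sf : s\in[0,t]\}$ is a norm-compact subset of $C_0(E)$.

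The final step invokes the standard functional-analytic fact that any norm-compact subset of $C_0(E)$ vanishes uniformly at infinity. Concretely, by total boundedness one can cover $\mathcal{F}$ by finitely many $\|\cdot\|_\infty$-balls of radius $\varepsilon/2$ centred at $T_{s_1}f,\dots,T_{s_n}f\in C_0(E)$; choosing compacts $\wt K_i\subset E$ with $|T_{s_i}f|<\varepsilon/2$ off $\wt K_i$ and setting $\wt K:=\bigcup_{i=1}^n \wt K_i$ yields the required set, since any $T_s f\in\mathcal{F}$ is within $\varepsilon/2$ of some $T_{s_i}f$ in sup-norm and hence satisfies $T_sf(x)<\varepsilon$ for $x\notin\wt K$. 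This uniform-vanishing-at-infinity passage is the only genuinely nontrivial point in the argument; the rest follows directly from the Feller hypothesis combined with the Urysohn approximation of $\1_K$.
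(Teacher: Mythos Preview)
Your argument is correct and takes a genuinely different route from the paper. You exploit that $s\mapsto T_s f$ is norm-continuous on $[0,t]$, so its image is a compact subset of $C_0(E)$, and then use total boundedness to produce a single compact $\wt K$ off which every $T_s f$ is uniformly small. The paper's proof is instead a direct iteration: it first finds $t_\varepsilon>0$ and a compact $K_1=\supp f$ so that $T_s f\le \varepsilon/2$ off $K_1$ for $s\in[0,t_\varepsilon]$, and then repeatedly applies the semigroup identity $T_{t_\varepsilon+s}f = T_{t_\varepsilon}(T_s f)$ and a fresh Urysohn-type step to push the time window from $[0,kt_\varepsilon]$ to $[0,(k+1)t_\varepsilon]$, enlarging the compact at each stage, until $kt_\varepsilon>t$. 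Your approach is shorter and more conceptual, buying uniformity in $s$ in one stroke via compactness in $C_0(E)$; the paper's bootstrap is more elementary and explicitly constructive, producing a concrete chain $K\subset K_1\subset\cdots\subset K_k=\wt K$ without invoking compactness in the function space.
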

        \begin{proof}
            Denote by $(T_t)_t$ the strongly continuous semigroup on $C_0(E)$ associated with $(Z(t))_t$. Fix $\varepsilon>0$, $t>0$ and a compact $K\subset E$. 
            
            By Urysohn's lemma, there exists $f\in C_c(E)$ such that $0\le f\le 1$, $f\equiv 1$ on some neighborhood of $K$.
            Since $T_tf\to f$ strongly as $t\to 0$, there exists $t_\varepsilon>0$ such that for $K_1\coloneqq \supp f$ it holds 
            \begin{align}\label{eq:1513a}
                T_s \1_K(x) \le T_s f(x) \le \frac\varepsilon2, \quad x\notin K_1, \, s\in [0,t_\varepsilon],
            \end{align}
            where the first inequality holds for all $x\in E$.
            
            If $t_\varepsilon\ge t$, the proof is finished. If not, for $s\in [0,t_\varepsilon]$ we have
            \begin{align}\label{eq:1513b}
                T_{t_\varepsilon+s} \1_K(x)\le T_{t_\varepsilon+s} f(x) \le T_{t_\varepsilon}\big(T_s f \1_{K_1} + T_s f \1_{K_1^c}\big)\le T_{t_\varepsilon}\1_{K_1}(x)+\frac\varepsilon2,
            \end{align}
            where we used $\|T_s f\|\le \|f\|=1$ for the first term in the last inequality and \eqref{eq:1513a} for the second term. By the Feller property (i.e. by using again Urysohn's lemma), there exists a compact $K_2\supset K_1$ such that $T_{t_\varepsilon}\1_{K_1}(x)\le \frac{\varepsilon}{4}$, for all $x\notin K_2$. Therefore, by using this in \eqref{eq:1513b}, we get that for all $x\notin K_2$ and all $s\in [0,2t_\varepsilon]$ we have
            \begin{align}\label{eq:1514a}
                T_s \1_K(x)  \le T_s f\le \frac34\varepsilon, \quad x\notin K_2, \, s\in [0,2t_\varepsilon].
            \end{align}

            We can now bootstrap this argument. For $s\in [0,2t_\varepsilon]$ we have
            \begin{align}\label{eq:1514b}
                T_{t_\varepsilon+s} \1_K(x)\le T_{t_\varepsilon+s} f(x)\le T_{t_\varepsilon}\big(T_s f \1_{K_2} + T_s f \1_{K_2^c}\big)\le T_{t_\varepsilon}\1_{K_2}(x)+\frac34\varepsilon.
            \end{align}
            Further, there exists a compact $K_3\supset K_2$ such that $T_{t_\varepsilon}\1_{K_2}(x)\le \frac{\varepsilon}{2^3}$, for all $x\notin K_3$. Therefore, by using this in \eqref{eq:1514b}, we get that it holds
            \begin{align}\label{eq:1514a1}
                T_s \1_K(x)  \le T_sf(x)\le \frac78\varepsilon, \quad x\notin K_3, \, s\in [0,3t_\varepsilon].
            \end{align}

            We stop this argument after $k\in \N$ {steps, i.e., until $k\,t_\varepsilon>t$,} since at that point we constructed a compact $K_k\subset E$ such that it holds
            \begin{align}\label{eq:1514a2}
                T_s \1_K(x)  \le \frac{2^k-1}{2^k}\varepsilon, \quad x\notin K_k, \, s\in [0,kt_\varepsilon]
            \end{align}
            {which is the claimed statement for $\wt K = K_k$.}
        \end{proof}

        \begin{lemma}\label{ap:l:generator}
        
            Let $X=(X_t)_t$ and $Y=(Y_t)_t$ be independent Feller processes on locally compact and separable metric space $E$, with the infinitesimal generators $G^{X}$ and $G^{Y}$, respectively. Then the process $Z=({Z}_t)_t=(X_t,Y_t)_{t}$ is a Feller process  on $E\times E$ with the generator $(G,\DD(G))$ such that
            \begin{align}
                Gf(x,y)=G^X\big(f(\cdot,y)\big)(x)+
                G^Y\big(f(x,\cdot)\big)(y),
            \end{align}
            for all $f\in C_0(E\times E)$ such that $f(\cdot,y)\in \DD(G^X)$ and $f(x,\cdot)\in \DD(G^Y)$ with $G^X\big(f(\cdot,y)\big)(x)+
                G^Y\big(f(x,\cdot)\big)(y)\in C_0(E\times E)$.
        \end{lemma}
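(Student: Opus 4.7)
The plan is to proceed in three steps: establish the Feller property of $Z$ on $E \times E$; split $(P^Z_t f - f)/t$ into two terms using the commutativity of the coordinate-wise semigroups; and identify the pointwise limit to invoke the pointwise characterisation of Feller generators already used in Theorem \ref{t:generator}.

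\textbf{Feller property of $Z$.} The space $E \times E$ is locally compact and separable under the product metric. By independence of $X$ and $Y$, the transition operator is
$$P^Z_t f(x,y) = \iint f(x',y') \, p^X_t(x, dx') \, p^Y_t(y, dy'),$$
and on product functions $f(x,y) = g(x)h(y)$ it reduces to $T^X_t g(x) \cdot T^Y_t h(y) \in C_0(E\times E)$. Since such products span a dense subspace of $C_0(E \times E)$ by Stone--Weierstrass (as used in Lemma \ref{lem:TensorProductFellerSemigroup}) and $P^Z_t$ is a positive contraction, both $P^Z_t(C_0(E \times E)) \subseteq C_0(E \times E)$ and strong continuity at $t = 0$ extend from product functions to all of $C_0(E \times E)$.

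\textbf{Splitting for the generator.} Define the coordinate-wise operators $A^X_t f(x,y) := T^X_t(f(\cdot,y))(x)$ and $A^Y_t f(x,y) := T^Y_t(f(x,\cdot))(y)$; by independence these commute and satisfy $P^Z_t = A^X_t A^Y_t$. Fix $(x,y)$ and write
$$\frac{P^Z_t f - f}{t}(x,y) = \frac{A^X_t f - f}{t}(x,y) + A^X_t\!\left[\frac{A^Y_t f - f}{t}\right]\!(x,y).$$
The first summand tends to $G^X f(\cdot,y)(x)$ by the hypothesis $f(\cdot,y) \in \DD(G^X)$ and the Feller strong convergence $\|(T^X_t h - h)/t - G^X h\|_\infty \to 0$. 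Setting $g_t := (A^Y_t f - f)/t$ and $g_0(x,y) := G^Y f(x,\cdot)(y)$, the hypothesis $f(x,\cdot) \in \DD(G^Y)$ yields $\|g_t(x,\cdot) - g_0(x,\cdot)\|_\infty \to 0$ for each fixed $x$, and the triangle estimate
$$|A^X_t g_t(x,y) - g_0(x,y)| \leq |T^X_t g_0(\cdot,y)(x) - g_0(x,y)| + |T^X_t(g_t(\cdot,y) - g_0(\cdot,y))(x)|$$
reduces the analysis to these two pieces.

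\textbf{Main obstacle and conclusion.} The first piece on the right vanishes by Feller strong continuity of $T^X_t$ applied to $g_0(\cdot,y) \in C_0(E)$. The genuine difficulty is the second piece: the hypothesis only gives convergence $g_t(x,\cdot) \to g_0(x,\cdot)$ in sup norm over $y$ for each fixed $x$, not uniformly in $x$, so contractivity alone does not close the argument. My plan is to exploit the integral representation $g_t(x',y) = t^{-1}\int_0^t T^Y_s G^Y f(x',\cdot)(y)\, ds$, valid because $f(x',\cdot) \in \DD(G^Y)$, together with the probabilistic form $T^X_t(\cdot)(x) = \E^x[\,\cdot\,(X_t)]$; the uniform bound $\sup_{s \in [0,t]}\|T^Y_s G^Y f(x',\cdot)\|_\infty \leq \|G^Y f(x',\cdot)\|_\infty$ combined with the right-continuity of $X$ at $0$ then yields pointwise convergence of the residual to $0$ via dominated convergence. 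Once pointwise convergence of $(P^Z_t f - f)/t$ to $G^X f(\cdot,y)(x) + G^Y f(x,\cdot)(y)$ is established and the limit lies in $C_0(E\times E)$ by hypothesis, \cite[Theorem 1.33]{LevyMatters3} (the same result used in the proof of Theorem \ref{t:generator}) upgrades this to strong convergence and places $f$ in $\DD(G)$ with the stated generator formula.
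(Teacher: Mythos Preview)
Your argument is correct and follows essentially the same route as the paper: the algebraic splitting $P^Z_t f - f = (A^X_t f - f) + A^X_t(A^Y_t f - f)$ is exactly the decomposition the paper writes as $\frac{\E^{(x,y)}f(X_t,y)-f(x,y)}{t} + \int_E\frac{\E^{y}f(w,Y_t)-f(w,y)}{t}\,\P^{x}(X_t\in dw)$, and both proofs finish by appealing to \cite[Theorem~1.33]{LevyMatters3} once the pointwise limit is identified. The only difference is in the justification of the cross term: the paper disposes of it in one line via a ``generalized dominated convergence theorem'' (integrands converging uniformly on compacts against measures $\P^x(X_t\in\cdot)\Rightarrow\delta_x$, combined through Prokhorov), whereas you unpack it further with the integral representation $g_t = t^{-1}\int_0^t T^Y_s G^Y f\,ds$ and pathwise right-continuity of $X$. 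Your treatment is more explicit about where the analytic difficulty sits; the paper's is terser but relies on the same underlying continuity.
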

        \begin{proof}
            \begin{align*}
                \frac{\E^{(x,y)}f(X_t,Y_t)-f(x,y)}{t}&=\frac{\E^{(x,y)}f(X_t,y)-f(x,y)}{t}\\&\hspace{6em}+\int_E\frac{\E^{(y)}f(w,Y_t)-f(w,y)}{t}\P^{x}(X_t\in dw)\\&\to G^X\big(f(\cdot,y)\big)(x)+G^Y\big(f(x,\cdot)\big)(y).
            \end{align*}
            The convergence of the second term if due to generalized dominated convergence theorem: if $f_n\to f$ uniformly on compact subsets of a Polish space $\mathbb S$, $f_n$ are uniformly bounded, and $\mu_n\to \mu$ weakly where $\mu_n$ are finite measures, then $\int_E f_n d\mu_n\to \int_E fd\mu$, easily proved by using Prokhorov's theorem.

            Hence, the domain of the generator certainly contains $f\in C_0(E\times E)$ such that $f(\cdot,y)\in \DD(G^X)$ and $f(x,\cdot)\in \DD(G^Y)$ with $G^X\big(f(\cdot,y)\big)(x)+
                G^Y\big(f(x,\cdot)\big)(y)\in C_0(E\times E)$.
        \end{proof}

	\bibliographystyle{ieeetr}
	\bibliography{literature_paper}

@book {Harris-branching1963,
    AUTHOR = {Harris, Theodore E.},
     TITLE = {The theory of branching processes},
    SERIES = {Die Grundlehren der mathematischen Wissenschaften},
    VOLUME = {Band 119},
 PUBLISHER = {Springer-Verlag, Berlin; Prentice Hall, Inc., Englewood
              Cliffs, NJ},
      YEAR = {1963},
     PAGES = {xiv+230},
   MRCLASS = {60.67},
  MRNUMBER = {163361},
MRREVIEWER = {P.\ A. P. Moran},
}

@Article{Colson2021,
  author    = {Colson, Chloé and Sánchez-Garduño, Faustino and Byrne, Helen M. and Maini, Philip K. and Lorenzi, Tommaso},
  journal   = {Proceedings of the Royal Society A: Mathematical, Physical and Engineering Sciences},
  title     = {Travelling-wave analysis of a model of tumour invasion with degenerate, cross-dependent diffusion},
  year      = {2021},
  issn      = {1471-2946},
  month     = dec,
  number    = {2256},
  volume    = {477},
  doi       = {10.1098/rspa.2021.0593},
  publisher = {The Royal Society},
}

@Article{Duchen2020,
  author    = {Duchen, Pablo and Hautphenne, Sophie and Lehmann, Laurent and Salamin, Nicolas},
  journal   = {Journal of Theoretical Biology},
  title     = {Linking micro and macroevolution in the presence of migration},
  year      = {2020},
  issn      = {0022-5193},
  month     = feb,
  pages     = {110087},
  volume    = {486},
  doi       = {10.1016/j.jtbi.2019.110087},
  publisher = {Elsevier BV},
}

@Article{He2026,
  author    = {He, Mingqi and Hautphenne, Sophie and Chan, Yao-ban},
  journal   = {Journal of Theoretical Biology},
  title     = {Approximate Bayesian computation for Markovian binary trees in phylogenetics},
  year      = {2026},
  issn      = {0022-5193},
  month     = jan,
  pages     = {112246},
  volume    = {616},
  doi       = {10.1016/j.jtbi.2025.112246},
  publisher = {Elsevier BV},
}

@InBook{Horton2023,
  author    = {Horton, Emma and Kyprianou, Andreas E.},
  pages     = {149--167},
  publisher = {Springer International Publishing},
  title     = {A General Family of Branching Markov Processes},
  year      = {2023},
  isbn      = {9783031395468},
  booktitle = {Stochastic Neutron Transport},
  doi       = {10.1007/978-3-031-39546-8_8},
  issn      = {2297-0398},
}

@Book{Etheridge2000,
  author    = {Etheridge, Alison},
  publisher = {American Mathematical Society},
  title     = {An introduction to superprocesses},
  year      = {2000},
  address   = {Providence, Rhode Island},
  isbn      = {9781470421670},
  note      = {Includes bibliographical references (pages 177-181) and indexes. Description based on print version record.},
  number    = {Volume 20},
  series    = {University lecture series},
  pagetotal = {1187},
  ppn_gvk   = {174959790X},
}

@InBook{Li2011,
  author    = {Li, Zenghu},
  pages     = {29--56},
  publisher = {Springer Berlin Heidelberg},
  title     = {Measure-Valued Branching Processes},
  year      = {2011},
  isbn      = {9783642150043},
  booktitle = {Measure-Valued Branching Markov Processes},
  doi       = {10.1007/978-3-642-15004-3_2},
  issn      = {1431-7028},
}

@Book{Kyprianou2014,
  author    = {Kyprianou, Andreas E.},
  publisher = {Springer Berlin Heidelberg},
  title     = {Fluctuations of Lévy Processes with Applications: Introductory Lectures},
  year      = {2014},
  isbn      = {9783642376320},
  doi       = {10.1007/978-3-642-37632-0},
  issn      = {2191-6675},
  journal   = {Universitext},
}

@Article{Zacharakis2023,
  author    = {Zacharakis, Ioannis and Tsihrintzis, Vassilios A.},
  journal   = {Science of The Total Environment},
  title     = {Integrated wildfire danger models and factors: A review},
  year      = {2023},
  issn      = {0048-9697},
  month     = nov,
  pages     = {165704},
  volume    = {899},
  doi       = {10.1016/j.scitotenv.2023.165704},
  publisher = {Elsevier BV},
}

@Article{Wrensford2025,
  author    = {Wrensford, Kwasi C. and Angert, Amy L. and Gaynor, Kaitlyn M.},
  journal   = {Trends in Ecology \& Evolution},
  title     = {Linking individual animal behavior to species range shifts under climate change},
  year      = {2025},
  issn      = {0169-5347},
  month     = aug,
  number    = {8},
  pages     = {805--817},
  volume    = {40},
  doi       = {10.1016/j.tree.2025.06.002},
  publisher = {Elsevier BV},
}

@Article{Kim2022,
  author    = {Kim, Moonil and Lee, Seonghun and Lee, Songhee and Yi, Koong and Kim, Hyung-Sub and Chung, Sanghoon and Chung, Junmo and Kim, Hyun Seop and Yoon, Tae Kyung},
  journal   = {Forests},
  title     = {Seed Dispersal Models for Natural Regeneration: A Review and Prospects},
  year      = {2022},
  issn      = {1999-4907},
  month     = apr,
  number    = {5},
  pages     = {659},
  volume    = {13},
  doi       = {10.3390/f13050659},
  publisher = {MDPI AG},
}

@Book{Lewis2016,
  author    = {Lewis, Mark A. and Petrovskii, Sergei V. and Potts, Jonathan R.},
  publisher = {Springer International Publishing},
  title     = {The Mathematics Behind Biological Invasions},
  year      = {2016},
  isbn      = {9783319320434},
  doi       = {10.1007/978-3-319-32043-4},
  issn      = {2196-9973},
  journal   = {Interdisciplinary Applied Mathematics},
}

@Article{Hastings2004,
  author    = {Hastings, Alan and Cuddington, Kim and Davies, Kendi F. and Dugaw, Christopher J. and Elmendorf, Sarah and Freestone, Amy and Harrison, Susan and Holland, Matthew and Lambrinos, John and Malvadkar, Urmila and Melbourne, Brett A. and Moore, Kara and Taylor, Caz and Thomson, Diane},
  journal   = {Ecology Letters},
  title     = {The spatial spread of invasions: new developments in theory and evidence},
  year      = {2004},
  issn      = {1461-0248},
  month     = nov,
  number    = {1},
  pages     = {91--101},
  volume    = {8},
  doi       = {10.1111/j.1461-0248.2004.00687.x},
  publisher = {Wiley},
}

@InBook{Scott2013,
  author    = {Scott, Jacob G. and Gerlee, Philip and Basanta, David and Fletcher, Alexander G. and Maini, Philip K. and Anderson, Alexander R.A.},
  pages     = {189--208},
  publisher = {Springer Netherlands},
  title     = {Mathematical Modeling of the Metastatic Process},
  year      = {2013},
  isbn      = {9789400778351},
  booktitle = {Experimental Metastasis: Modeling and Analysis},
  doi       = {10.1007/978-94-007-7835-1_9},
}

@Article{Martin2016,
  author    = {Martin, Jonathan and Hillen, Thomas},
  journal   = {Applied Sciences},
  title     = {The Spotting Distribution of Wildfires},
  year      = {2016},
  issn      = {2076-3417},
  month     = jun,
  number    = {6},
  pages     = {177},
  volume    = {6},
  doi       = {10.3390/app6060177},
  publisher = {MDPI AG},
}

@Article{Gupta2006,
  author    = {Gupta, Gaorav P. and Massagué, Joan},
  journal   = {Cell},
  title     = {Cancer Metastasis: Building a Framework},
  year      = {2006},
  issn      = {0092-8674},
  month     = nov,
  number    = {4},
  pages     = {679--695},
  volume    = {127},
  doi       = {10.1016/j.cell.2006.11.001},
  publisher = {Elsevier BV},
}

@Article{Hanahan2011,
  author    = {Hanahan, Douglas and Weinberg, Robert A.},
  journal   = {Cell},
  title     = {Hallmarks of Cancer: The Next Generation},
  year      = {2011},
  issn      = {0092-8674},
  month     = mar,
  number    = {5},
  pages     = {646--674},
  volume    = {144},
  doi       = {10.1016/j.cell.2011.02.013},
  publisher = {Elsevier BV},
}

@Article{Yamamoto2019,
  author    = {Yamamoto, Kimiyo N. and Liu, Lin L. and Nakamura, Akira and Haeno, Hiroshi and Michor, Franziska},
  journal   = {JCO Clinical Cancer Informatics},
  title     = {Stochastic Evolution of Pancreatic Cancer Metastases During Logistic Clonal Expansion},
  year      = {2019},
  issn      = {2473-4276},
  month     = dec,
  number    = {3},
  pages     = {1--11},
  doi       = {10.1200/cci.18.00079},
  publisher = {American Society of Clinical Oncology (ASCO)},
}

@Article{Wieland2025,
  author    = {Wieland, Vincent and Hasenauer, Jan},
  journal   = {Journal of Mathematical Biology},
  title     = {A stochastic modelling framework for cancer patient trajectories: combining tumour growth, metastasis, and survival},
  year      = {2025},
  issn      = {1432-1416},
  month     = may,
  number    = {6},
  volume    = {90},
  doi       = {10.1007/s00285-025-02229-6},
  publisher = {Springer Science and Business Media LLC},
}

@Article{Singh2025,
  author    = {Singh, Khimeer and Jacobs, Byron A.},
  journal   = {Bulletin of Mathematical Biology},
  title     = {A Network Based Model for Predicting Spatial Progression of Metastasis},
  year      = {2025},
  issn      = {1522-9602},
  month     = apr,
  number    = {5},
  volume    = {87},
  doi       = {10.1007/s11538-025-01441-1},
  publisher = {Springer Science and Business Media LLC},
}

@Article{Simpson2024,
  author    = {Simpson, Matthew J. and McCue, Scott W.},
  journal   = {Proceedings of the Royal Society A: Mathematical, Physical and Engineering Sciences},
  title     = {Fisher–KPP-type models of biological invasion: open source computational tools, key concepts and analysis},
  year      = {2024},
  issn      = {1471-2946},
  month     = jul,
  number    = {2294},
  volume    = {480},
  doi       = {10.1098/rspa.2024.0186},
  publisher = {The Royal Society},
}

@InBook{Sfakianakis2021,
  author    = {Sfakianakis, Nikolaos and Chaplain, Mark A. J.},
  pages     = {153--172},
  publisher = {Springer Singapore},
  title     = {Mathematical Modelling of Cancer Invasion: A Review},
  year      = {2021},
  isbn      = {9789811648663},
  booktitle = {Methods of Mathematical Oncology},
  doi       = {10.1007/978-981-16-4866-3_10},
  issn      = {2194-1017},
}

@Article{Reiter2017,
  author    = {Reiter, Johannes G. and Makohon-Moore, Alvin P. and Gerold, Jeffrey M. and Bozic, Ivana and Chatterjee, Krishnendu and Iacobuzio-Donahue, Christine A. and Vogelstein, Bert and Nowak, Martin A.},
  journal   = {Nature Communications},
  title     = {Reconstructing metastatic seeding patterns of human cancers},
  year      = {2017},
  issn      = {2041-1723},
  month     = jan,
  number    = {1},
  volume    = {8},
  doi       = {10.1038/ncomms14114},
  publisher = {Springer Science and Business Media LLC},
}

@Article{Martinson2021,
  author    = {Martinson, W. Duncan and Ninomiya, Hirokazu and Byrne, Helen M. and Maini, Philip K.},
  journal   = {Journal of Mathematical Biology},
  title     = {Comparative analysis of continuum angiogenesis models},
  year      = {2021},
  issn      = {1432-1416},
  month     = feb,
  number    = {4},
  volume    = {82},
  doi       = {10.1007/s00285-021-01570-w},
  publisher = {Springer Science and Business Media LLC},
}

@Article{Li2024,
  author    = {Li, Junping and Zhang, Wanting},
  journal   = {Boundary Value Problems},
  title     = {The multiple birth properties of multi-type Markov branching processes},
  year      = {2024},
  issn      = {1687-2770},
  month     = sep,
  number    = {1},
  volume    = {2024},
  doi       = {10.1186/s13661-024-01914-7},
  publisher = {Springer Science and Business Media LLC},
}

@Article{Lenarda2019,
  author    = {Lenarda, Pietro and Coclite, Alessandro and Decuzzi, Paolo},
  journal   = {Cellular and Molecular Bioengineering},
  title     = {Unraveling the Vascular Fate of Deformable Circulating Tumor Cells Via a Hierarchical Computational Model},
  year      = {2019},
  issn      = {1865-5033},
  month     = jul,
  number    = {6},
  pages     = {543--558},
  volume    = {12},
  doi       = {10.1007/s12195-019-00587-y},
  publisher = {Springer Science and Business Media LLC},
}

@Article{Katsaounis2023,
  author    = {Katsaounis, Dimitrios and Chaplain, Mark A. J. and Sfakianakis, Nikolaos},
  journal   = {Journal of Mathematical Biology},
  title     = {Stochastic differential equation modelling of cancer cell migration and tissue invasion},
  year      = {2023},
  issn      = {1432-1416},
  month     = jun,
  number    = {1},
  volume    = {87},
  doi       = {10.1007/s00285-023-01934-4},
  publisher = {Springer Science and Business Media LLC},
}

@Misc{Hu2024,
  author    = {Hu, Boyi and Wang, Zhongjian and Xin, Jack and Zhang, Zhiwen},
  title     = {A Stochastic Interacting Particle-Field Algorithm for a Haptotaxis Advection-Diffusion System Modeling Cancer Cell Invasion},
  year      = {2024},
  copyright = {Creative Commons Attribution 4.0 International},
  doi       = {10.48550/ARXIV.2407.05626},
  keywords  = {Numerical Analysis (math.NA), FOS: Mathematics, FOS: Mathematics},
  publisher = {arXiv},
}

@Article{Hirsch2025,
  author    = {Hirsch, M.G. and Pal, Soumitra and Rashidi Mehrabadi, Farid and Malikic, Salem and Gruen, Charli and Sassano, Antonella and Pérez-Guijarro, Eva and Merlino, Glenn and Sahinalp, S. Cenk and Molloy, Erin K. and Day, Chi-Ping and Przytycka, Teresa M.},
  journal   = {Cell Systems},
  title     = {Stochastic modeling of single-cell gene expression adaptation reveals non-genomic contribution to evolution of tumor subclones},
  year      = {2025},
  issn      = {2405-4712},
  month     = jan,
  number    = {1},
  pages     = {101156},
  volume    = {16},
  doi       = {10.1016/j.cels.2024.11.013},
  publisher = {Elsevier BV},
}

@Article{Heyde2019,
  author    = {Heyde, Alexander and Reiter, Johannes G. and Naxerova, Kamila and Nowak, Martin A.},
  journal   = {Proceedings of the National Academy of Sciences},
  title     = {Consecutive seeding and transfer of genetic diversity in metastasis},
  year      = {2019},
  issn      = {1091-6490},
  month     = jun,
  number    = {28},
  pages     = {14129--14137},
  volume    = {116},
  doi       = {10.1073/pnas.1819408116},
  publisher = {Proceedings of the National Academy of Sciences},
}

@Book{Hesketh2013,
  author    = {Robin Hesketh},
  publisher = {Cambridge University Press},
  title     = {Introduction to Cancer Biology},
  year      = {2013},
}

@Article{Gerlee2016,
  author    = {Gerlee, Philip and Nelander, Sven},
  journal   = {Mathematical Biosciences},
  title     = {Travelling wave analysis of a mathematical model of glioblastoma growth},
  year      = {2016},
  issn      = {0025-5564},
  month     = jun,
  pages     = {75--81},
  volume    = {276},
  doi       = {10.1016/j.mbs.2016.03.004},
  publisher = {Elsevier BV},
}

@Article{Gatenby1996,
  author  = {Gatenby, R. A. and Gawlinski, E. T.},
  journal = {Cancer research},
  title   = {A reaction-diffusion model of cancer invasion},
  year    = {1996},
  month   = jan,
  pages   = {5745–5753},
  volume  = {56},
  url     = {https://www.researchgate.net/publication/14237057_Gatenby_RA_Gawlinski_ETA_reaction-diffusion_model_of_cancer_invasion_Cancer_Res_56_5745-5753},
}

@Misc{Fittipaldi2022,
  author       = {Fittipaldi, Maria Clara and Palau, Sandra},
  howpublished = {https://arxiv.org/pdf/2203.09701},
  title        = {On multitype Branching Processes with Interaction},
  year         = {2022},
  copyright    = {Creative Commons Attribution Non Commercial No Derivatives 4.0 International},
  doi          = {10.48550/ARXIV.2203.09701},
  keywords     = {Probability (math.PR), FOS: Mathematics, FOS: Mathematics, 60F17, 60G51, 60H20, 60J25, 60J80, 92D25},
  publisher    = {arXiv},
}

@Article{Ezeobidi2025,
  author    = {Ezeobidi, Emmanuel I. and Truszkowska, Agnieszka},
  journal   = {Biomicrofluidics},
  title     = {Modeling the dynamics of circulating tumor cell clusters inside a microfluidic channel},
  year      = {2025},
  issn      = {1932-1058},
  month     = jan,
  number    = {1},
  volume    = {19},
  doi       = {10.1063/5.0249165},
  publisher = {AIP Publishing},
}

@Article{Enderling2014,
  author    = {Enderling, Heiko and Chaplain, Mark},
  journal   = {Current Pharmaceutical Design},
  title     = {Mathematical Modeling of Tumor Growth and Treatment},
  year      = {2014},
  issn      = {1381-6128},
  month     = aug,
  number    = {30},
  pages     = {4934--4940},
  volume    = {20},
  doi       = {10.2174/1381612819666131125150434},
  publisher = {Bentham Science Publishers Ltd.},
}

@Article{ElHachem2021,
  author    = {El-Hachem, Maud and McCue, Scott W. and Simpson, Matthew J.},
  journal   = {Physica D: Nonlinear Phenomena},
  title     = {Travelling wave analysis of cellular invasion into surrounding tissues},
  year      = {2021},
  issn      = {0167-2789},
  month     = dec,
  pages     = {133026},
  volume    = {428},
  doi       = {10.1016/j.physd.2021.133026},
  publisher = {Elsevier BV},
}

@Book{Durrett2015,
  author    = {Durrett, Richard},
  publisher = {Springer International Publishing},
  title     = {Branching Process Models of Cancer},
  year      = {2015},
  isbn      = {9783319160658},
  doi       = {10.1007/978-3-319-16065-8},
}

@Book{DelMoral2000,
  author    = {Del Moral, Pierre and Miclo, Laurent},
  publisher = {Séminaire de probabilités de Strasbourg},
  title     = {Branching and interacting particle systems. Approximations of Feynman-Kac formulae with applications to non-linear filtering.},
  year      = {2000},
  volume    = {34},
  url       = {https://eudml.org/doc/114038},
}

@Misc{Chen2007,
  author    = {Chen, L. Leon and Beck, Christian},
  title     = {A superstatistical model of metastasis and cancer survival},
  year      = {2007},
  copyright = {Assumed arXiv.org perpetual, non-exclusive license to distribute this article for submissions made before January 2004},
  doi       = {10.48550/ARXIV.0711.4687},
  keywords  = {Medical Physics (physics.med-ph), Statistical Mechanics (cond-mat.stat-mech), Biological Physics (physics.bio-ph), FOS: Physical sciences, FOS: Physical sciences},
  publisher = {arXiv},
}

@Article{Avanzini2019,
  author    = {Avanzini, Stefano and Antal, Tibor},
  journal   = {PLOS Computational Biology},
  title     = {Cancer recurrence times from a branching process model},
  year      = {2019},
  issn      = {1553-7358},
  month     = nov,
  number    = {11},
  pages     = {e1007423},
  volume    = {15},
  doi       = {10.1371/journal.pcbi.1007423},
  editor    = {Maini, Philip K},
  publisher = {Public Library of Science (PLoS)},
}

@Article{Franssen2019,
  author    = {Franssen, Linnea C. and Lorenzi, Tommaso and Burgess, Andrew E. F. and Chaplain, Mark A. J.},
  journal   = {Bulletin of Mathematical Biology},
  title     = {A Mathematical Framework for Modelling the Metastatic Spread of Cancer},
  year      = {2019},
  issn      = {1522-9602},
  month     = mar,
  number    = {6},
  pages     = {1965--2010},
  volume    = {81},
  doi       = {10.1007/s11538-019-00597-x},
  publisher = {Springer Science and Business Media LLC},
}

@Article{Blanco2021,
  author    = {Blanco, Beatriz and Campos, Juan and Melchor, Juan and Soler, Juan},
  journal   = {Mathematics},
  title     = {Modeling Interactions among Migration, Growth and Pressure in Tumor Dynamics},
  year      = {2021},
  issn      = {2227-7390},
  month     = jun,
  number    = {12},
  pages     = {1376},
  volume    = {9},
  doi       = {10.3390/math9121376},
  publisher = {MDPI AG},
}

@Article{Chen2020,
  author    = {Chen, Li and Painter, Kevin and Surulescu, Christina and Zhigun, Anna},
  journal   = {Philosophical Transactions of the Royal Society B: Biological Sciences},
  title     = {Mathematical models for cell migration: a non-local perspective},
  year      = {2020},
  issn      = {1471-2970},
  month     = jul,
  number    = {1807},
  pages     = {20190379},
  volume    = {375},
  doi       = {10.1098/rstb.2019.0379},
  publisher = {The Royal Society},
}

@Article{Painter2023,
  author    = {Painter, Kevin J. and Hillen, Thomas and Potts, Jonathan R.},
  journal   = {Mathematical Models and Methods in Applied Sciences},
  title     = {Biological modeling with nonlocal advection–diffusion equations},
  year      = {2023},
  issn      = {1793-6314},
  month     = nov,
  number    = {01},
  pages     = {57--107},
  volume    = {34},
  doi       = {10.1142/s0218202524400025},
  publisher = {World Scientific Pub Co Pte Ltd},
}

@Article{Lorenzi2024,
  author    = {Lorenzi, Tommaso and Macfarlane, Fiona R. and Painter, Kevin J.},
  journal   = {European Journal of Applied Mathematics},
  title     = {Derivation and travelling wave analysis of phenotype-structured haptotaxis models of cancer invasion},
  year      = {2024},
  issn      = {1469-4425},
  month     = feb,
  number    = {2},
  pages     = {231--263},
  volume    = {36},
  doi       = {10.1017/s0956792524000056},
  publisher = {Cambridge University Press (CUP)},
}

@Article{BELLOMO2008,
  author    = {Bellomo, N. and Li, N. K. and Maini, P. K.},
  journal   = {Mathematical Models and Methods in Applied Sciences},
  title     = {ON THE FOUNDATIONS OF CANCER MODELLING: SELECTED TOPICS, SPECULATIONS, AND PERSPECTIVES},
  year      = {2008},
  issn      = {1793-6314},
  month     = apr,
  number    = {04},
  pages     = {593--646},
  volume    = {18},
  doi       = {10.1142/s0218202508002796},
  publisher = {World Scientific Pub Co Pte Ltd},
}

@book {AthNey,
    AUTHOR = {Athreya, Krishna B. and Ney, Peter E.},
     TITLE = {Branching processes},
    SERIES = {Die Grundlehren der mathematischen Wissenschaften},
    VOLUME = {Band 196},
 PUBLISHER = {Springer-Verlag, New York-Heidelberg},
      YEAR = {1972},
     PAGES = {xi+287},
   MRCLASS = {60J80},
  MRNUMBER = {373040},
MRREVIEWER = {C.\ C.\ Heyde},
}

@Article{Frei2019,
  author    = {Frei, Christoph and Hillen, Thomas and Rhodes, Adam},
  journal   = {Mathematical Medicine and Biology: A Journal of the IMA},
  title     = {A stochastic model for cancer metastasis: branching stochastic process with settlement},
  year      = {2019},
  issn      = {1477-8602},
  month     = jun,
  number    = {2},
  pages     = {153--182},
  volume    = {37},
  doi       = {10.1093/imammb/dqz009},
  publisher = {Oxford University Press (OUP)},
}

@book {applebaum-semigroups,
	AUTHOR = {Applebaum, David},
	TITLE = {Semigroups of linear operators},
	PUBLISHER = {Cambridge University Press},
	YEAR = {2019},
}

@InProceedings{Kyprianou,
  author       = {Andreas E. Kyprianou},
  title        = {(Very rough) notes on C-M-J Processes},
  organization = {University of Bath},
  url          = {https://people.bath.ac.uk/ak257/CMJ.pdf},
}

@InProceedings{Berestycki2014,
  author       = {Julien Berestycki},
  title        = {Topics on Branching Brownian Motion},
  year         = {2014},
  organization = {University of Oxford},
  publisher    = {EBP XVIII},
  url          = {https://www.stats.ox.ac.uk/~berestyc/Articles/EBP18_v2.pdf},
}

@book{Revuz,
    AUTHOR = {Revuz, Daniel and Yor, Marc},
     TITLE = {Continuous martingales and {B}rownian motion},
    SERIES = {Grundlehren der mathematischen Wissenschaften [Fundamental
              Principles of Mathematical Sciences]},
    VOLUME = {293},
   EDITION = {Third},
 PUBLISHER = {Springer-Verlag, Berlin},
      YEAR = {1999},
     PAGES = {xiv+602},
      ISBN = {3-540-64325-7},
   MRCLASS = {60G44 (60G07 60H05)},
  MRNUMBER = {1725357},
       DOI = {10.1007/978-3-662-06400-9},
       URL = {https://doi.org/10.1007/978-3-662-06400-9},
}

@book{LevyMatters3,
	author = {Bj\"orn B\"ottcher and Ren'\e Schilling and Jian Wang},
	title = {L\'evy Matters III. L\'evy-Type Processes: Construction, Approximation and Sample Path Properties},
	edition = {},
	publisher = {Springer},
	address = {},
	year = {2013},
}
	

	\bigskip
	
	\noindent{\bf Ivan Bio\v{c}i\'c}
	
	\noindent Department of Mathematics, Faculty of Science, University of Zagreb, Zagreb, Croatia,
	
	\noindent Email: \texttt{ibiocic@math.hr}
    
        \noindent Department of Mathematics ``Giuseppe Peano", University of Turin, Turin, Italy,
	
	\noindent Email: \texttt{ivan.biocic@unito.it}
	
	\bigskip
	
	\noindent{\bf Bruno Toaldo}
	
	\noindent Department of Mathematics ``Giuseppe Peano", University of Turin, Turin, Italy,
	
	\noindent Email: \texttt{bruno.toaldo@unito.it}

    \bigskip
	
	\noindent{\bf Lena Zuspann}


    \noindent Mathematical Institute, University of Oxford, Oxford, UK,

	\noindent Email: \texttt{lena.zuspann@maths.ox.ac.uk}

    \noindent Department of Mathematics ``Giuseppe Peano", University of Turin, Turin, Italy.
\end{document}